\newtheorem{theorem}{Theorem}
\newtheorem{lemma}{Lemma}
\title{Intersection of a Moran type Sierpinski carpet and a line with rational slope
%%%% Cite as
%%%% Update your official citation here when published 
\thanks{\textit{\underline{Citation}}: 
\textbf{Authors. Title. Pages.... DOI:000000/11111.}} 
}
\author{
  Simin Bao\\ 
  %Affiliation \\
  ChongQing University Collage of Mathematics and Statistics\\
  ChongQing\\
  %\texttt{\{SiminBao, Author2\}email@email} \\
  %% examples of more authors
  % \And
  %Author3 \\
  %Affiliation \\
  %Univ \\
  %City\\
  %\texttt{email@email} \\
  %% \AND
  %% Coauthor \\
  %% Affiliation \\
  %% Address \\
  %% \texttt{email} \\
  %% \And
  %% Coauthor \\
  %% Affiliation \\
  %% Address \\
  %% \texttt{email} \\
  %% \And
  %% Coauthor \\
  %% Affiliation \\
  %% Address \\
  %% \texttt{email} \\
}
\begin{document}
\maketitle

\begin{abstract}
In 2005, Liu et al. calculated the dimensionality of the intersection of Sierpinski carpet and a straight line with rational slope in the sense of Lebesgue measure.Sierpinski carpet is a self-similar set in two-dimensional planes obtained by an iterative function system, so each layer has the same structure. While the Sierpinski carpet set with Moran structure is the limit set obtained by the action of two iterative function systems in the two-dimensional plane, which we denote as ~$F_{\sigma}$~. And the structure of each layer may be different, controlled by 0,1 sequence ~$\sigma$~ and controlled by the set. In this paper, the upper and lower box dimensions of the set ~$F_{\sigma}$~ and the straight line ~$L_{a}$~ with rational slope are calculated, where ~$a$~ is the intercept of the straight line. In addition, we consider some related problem. The main difficulty in the research is that the structure of each layer of the set ~$F_{\sigma}$~ may be different, so the structure of each layer needs to be considered with the help of the sequence ~$\sigma$~ in the calculation process.
\end{abstract}

% keywords can be removed
\keywords{box dimension \and Sierpinski carpet \and Moran structure \and slice}

\section{Introduction}
Fractal geometry is a new branch of mathematics founded by mathematicians ~Mandelbrot in the 20 century 70 , which provides an important mathematical method and theoretical framework for the study of irregular point sets and nonlinear phenomena. In this paper, we consider the interception problem of the fractal set, the intersection of the fractal set and the ~$(n-m)$~ dimension subspace in the Euclidean space ~$mathbb{R}^{n}$~ is called the interception of the fractal set. The study of the structure and dimension of the truncation of fractal sets is a popular topic in fractal research. ~1954~, Marstrand\cite{MR0063439} proved that the ~Hausdorff~ dimension of a horizontal truncation of a fractal set on a two-dimensional plane does not exceed the Hausdorff dimension of the fractal set minus one, in almost every sense of Lebesgue, and this conclusion is called Marstrand theorem.
\begin{theorem}[Marstrand Theorem]\cite{MR0063439}
	Let ~$A\in\mathbb{R}^{2}$~. Suppose ~$\dim_{H}(A)\ge 1$~. Let
	\begin{align*}
		A_{x}=\{y:(x,y)\in A\}
	\end{align*}
	Then for ~$Leb-a.e.$~ x,we have
	\begin{align*}
		\dim_{H}(A_{x})\le \dim_{H}(A)-1
	\end{align*}
\end{theorem}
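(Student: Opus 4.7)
\medskip

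\noindent\textbf{Proof proposal.}
The plan is to run a Fubini-type covering argument relating $s$-dimensional Hausdorff measure of $A$ to $(s-1)$-dimensional Hausdorff measure of the vertical slices $A_x$. Fix any $s > \dim_H(A)$. Since $\mathcal{H}^s(A)=0$, for every $\varepsilon,\delta>0$ we can cover $A$ by sets $U_i$ of diameter $r_i \le \delta$ with $\sum_i r_i^s < \varepsilon$. Without loss of generality each $U_i$ can be taken to be a closed square (or replaced by its bounding box), so its vertical projection $\pi(U_i)$ onto the $x$-axis is an interval of length exactly $r_i$, and for each $x\in\pi(U_i)$ the slice $(U_i)_x$ has diameter at most $r_i$.

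The key estimate is the following one-dimensional Fubini inequality. For every fixed $x$, the family $\{(U_i)_x : x\in\pi(U_i)\}$ is a $\delta$-cover of $A_x$, so
\begin{align*}
\mathcal{H}^{s-1}_\delta(A_x) \;\le\; \sum_{i\,:\,x\in\pi(U_i)} r_i^{s-1}.
\end{align*}
Integrating over $x\in\mathbb{R}$ and interchanging sum and integral,
\begin{align*}
\int_\mathbb{R} \mathcal{H}^{s-1}_\delta(A_x)\,dx \;\le\; \sum_i r_i^{s-1}\,\mathrm{Leb}(\pi(U_i)) \;=\; \sum_i r_i^{s} \;<\; \varepsilon.
\end{align*}
Letting $\varepsilon\to 0$ (with $\delta$ fixed) shows that $\int \mathcal{H}^{s-1}_\delta(A_x)\,dx = 0$, and then Fatou's lemma as $\delta\to 0$ yields $\int \mathcal{H}^{s-1}(A_x)\,dx = 0$. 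Hence $\mathcal{H}^{s-1}(A_x)=0$ for Lebesgue-a.e.\ $x$, so $\dim_H(A_x)\le s-1$ for a.e.\ $x$.

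To upgrade this from a single exponent to the stated inequality, choose a countable sequence $s_n \downarrow \dim_H(A)$ and let $E_n$ be the exceptional null set for $s_n$. Then $E=\bigcup_n E_n$ is still Lebesgue null, and for $x\notin E$ we have $\dim_H(A_x)\le s_n-1$ for every $n$, giving $\dim_H(A_x)\le \dim_H(A)-1$.

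The main obstacle I expect is the Fubini-style step: Hausdorff measures are not product measures, so one cannot invoke classical Fubini directly. This is what forces us to work at the level of the pre-measure $\mathcal{H}^{s-1}_\delta$ (for which the covering bound is combinatorial and the interchange of sum and Lebesgue integral is elementary) and only afterwards pass to $\delta\to 0$ via Fatou. A minor technical point is justifying that we may assume the covering sets are squares (equivalently, replacing arbitrary sets of diameter $r_i$ by their axis-aligned bounding boxes costs only a bounded factor in $r_i$, which is absorbed into the constant and does not affect the vanishing conclusion).
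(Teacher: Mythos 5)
Your argument is correct, and in fact the paper offers no proof of this statement at all: it is quoted as classical background and attributed to Marstrand \cite{MR0063439}, so there is nothing in the paper to compare against. What you have written is the standard "easy direction" of the slicing theorem as it appears in Falconer's and Mattila's texts: pass to the pre-measures $\mathcal{H}^{s-1}_{\delta}$, bound $\mathcal{H}^{s-1}_{\delta}(A_x)$ by the slice contributions of a $\delta$-cover, integrate in $x$ using $\mathrm{Leb}(\pi(U_i))\le r_i$ to recover $\sum_i r_i^{s}<\varepsilon$, and then send $\delta\to 0$ and $s\downarrow\dim_H(A)$ along a countable sequence. All the steps are sound; note that since $s>\dim_H(A)\ge 1$ the exponent $s-1$ is positive, so $\mathcal{H}^{s-1}_\delta$ is a legitimate pre-measure, and the final monotone limit in $\delta$ can be justified by monotone convergence just as well as by Fatou.

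The only point you should make explicit is the measurability (or integrability) of the function $x\mapsto \mathcal{H}^{s-1}_{\delta}(A_x)$, which is needed before you may write $\int_{\mathbb{R}}\mathcal{H}^{s-1}_{\delta}(A_x)\,dx$. For a general set $A$ this map need not be Borel measurable, and the standard fix is to run the whole computation with the upper Lebesgue integral, for which the termwise bound
\begin{align*}
\overline{\int}_{\mathbb{R}} \mathcal{H}^{s-1}_{\delta}(A_x)\,dx \;\le\; \sum_i r_i^{\,s-1}\,\mathrm{Leb}\bigl(\pi(U_i)\bigr)
\end{align*}
still holds and still forces the integrand to vanish almost everywhere. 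Your remark about replacing the $U_i$ by bounding boxes is harmless but also unnecessary: you only use $\mathrm{diam}\bigl((U_i)_x\bigr)\le r_i$ and $\mathrm{Leb}(\pi(U_i))\le r_i$, both of which hold for arbitrary sets of diameter $r_i$. With the upper-integral caveat addressed, the proof is complete.
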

Mattila \cite{MR0409774} gave a generalization of high-dimensional space in ~1975~. In ~2005~, Liu et al. \cite{MR2334955} studied the dimension of the intersection of Sierpinski carpet and a straight line with a slope of rational numbers, and obtained the Hausdorff dimension and box dimension of the intersection of Sierpinski carpet and a straight line with rational slope in almost every sense of the Lebesgue measure. In ~2012~ , B'{a}r'{a}ny et al.\cite{MR2929601} used the Lebesgue measure to project onto a straight line, and obtained that the dimension of the rational intercept of the Sierpinski gasket would be strictly less than the dimension of the Sierpinski gasket minus one, which was a promotion of Liu's work. In ~2014~, B'{a}r'{a}ny and Rams\cite{MR3276836} studies yielded the truncated dimension of a class of sets similar to ~Sierpinski~ carpets.

In this paper, we study a rational intercept set ~$F_{\sigma}$~ of Sierpinski carpet with Moran type, where ~$\sigma$~ is a ~0,1~ sequence that controls the structure of Sierpinski carpet set. In this paper, we calculate the dimension of the intersection of ~$F_{\sigma}$~ and the rational line ~$L_{a}:y=x\tan\theta+a$~, where ~$\theta\in[0,frac{pi}{2})$~ is one such that ~$\tan\theta$~ is the given value of the rational number; ~$a$~ is the intercept of the straight line ~$L_{a}$~, and it is easy to see ~$a\in[-\tan\theta,1]$~. Inspired by the work done by Liu et al., we calculate the upper and lower box dimensions of the rational intercept of the set ~$F_{\sigma}$~ by calculating the number of small squares intersecting ~$L_{a}$~ in each layer of the set ~$F_{\sigma}$~. In addition, this paper further considers a class of multifractal problems of rational intercepts of Sierpinski carpet sets with Moran type.

\section{The defination of Sierpinski carpet with Moran type}
\label{def}

Given two collections:
\begin{align*}
	&\Omega_{0} = \{(0,0),(0,1),(0,2),(1,0),(1,2),(2,0),(2,1),(2,2)\},\\
	&\Omega_{1} = \{(0,0),(0,1),(0,2),(0,3),(1,0),(1,3),(2,0),(2,1),(3,0),(3,1),(3,2),(3,3)\}.
\end{align*}
Define two iterative function systems as follows:
\begin{align*}
	\bigg\{ \Phi_{d}^{0}(x,y) = \frac{(x+y)+d}{3} \bigg\}_{d\in\Omega_{0}},\\
	\bigg\{ \Phi_{d}^{1}(x,y) = \frac{(x+y)+d}{4} \bigg\}_{d\in\Omega_{1}}.
\end{align*}
where~$d = (d^{1},d^{2})$~.~$\sigma = \sigma_{1}\sigma_{2}\sigma_{3}\ldots\in \{0,1\}^{\mathbb{N}}$~ is 0,1 sequence. The frequency of 0,1 in the definition sequence ~$\sigma$~ is as follows:
\begin{align*}
	n_{0}(k) &= \sharp\{1 \le i \le k :\sigma_{i} = 0\},\\
	n_{1}(k) &= \sharp\{1 \le i \le k :\sigma_{i} = 1\}.
\end{align*}
In this article, we remember
\begin{align*}
	\Phi_{d_{1}d_{2}\ldots d_{k}}^{\sigma_{1}\sigma_{2}\ldots\sigma_{k}}&=
	\Phi_{d_{1}}^{\sigma_{1}}\circ\Phi_{d_{2}}^{\sigma_{2}}\circ\ldots\circ\Phi_{d_{k}}^{\sigma_{k}},\\
	\Omega_{\sigma_{1}\sigma_{2}\ldots\sigma_{k}}&=\prod_{i=1}^{k}\Omega_{\sigma_{i}}.
\end{align*}
We define the set ~$F_{\sigma}$~ according to the following rule:
\begin{itemize}
	\item[$(1)$]  Let ~$F_{0} := [0,1] \times [0,1]$~;
	\item[$(2)$]  For any ~$j\ge1$~, if ~$\sigma_{j} = 0$~, we use the iterative function system ~${\Phi_{d}^{0}}_{d\in\Omega_{0}}$~ to act on the set ~$F_{j-1}$~ to get the set ~$F_{j}$~; If ~$\sigma_{j} = 1$~, we use the iterative function system ~${\Phi_{d}^{1}}_{d\in\Omega_{1}}$~ to act on the set ~$F_{j-1}$~ to get the set ~$F_{j}$~, then the set ~$F_{j}$~ is composed of ~$3^{n_{0}(j)}4^{n_{1}(j)}$~ small squares.
\end{itemize}
then
\begin{align*}\label{EQ21}
	F_{\sigma} = \bigcap_{j \ge 1} F_{j}.
\end{align*}
From the definition of the set ~$F_{\sigma}$~, it can be seen that the set ~$F_{\sigma}$~ has a similar structure to the common two-dimensional plane Sierpinski carpets, and also has a Moran structure, so we call such a set Sierpinski carpet with ~Moran~ type.
Define
\begin{align*}
	n_{0}=\lim_{k \to \infty}\frac{n_{0}(k)}{k},\quad
	n_{1}=\lim_{k \to \infty}\frac{n_{1}(k)}{k}
\end{align*}
then we can prove that
\begin{align*}
	\dim_{B}(F_{\sigma})=\dim_{H}(F_{\sigma})=\frac{n_{0}\log8+n_{1}\log12}{n_{0}\log3+n_{1}\log4}.
\end{align*}
If ~$\sigma=0101\ldots$~,then the image of ~$F_{\sigma}$~ of the first three steps approximation is shown in \ref{F22}.
\begin{figure*}[!ht]
	% Requires \usepackage{graphicx}
	\centering
	~~~~~
	%\scalebox{0.7}{0.7}{includegraphics[width=14cm]{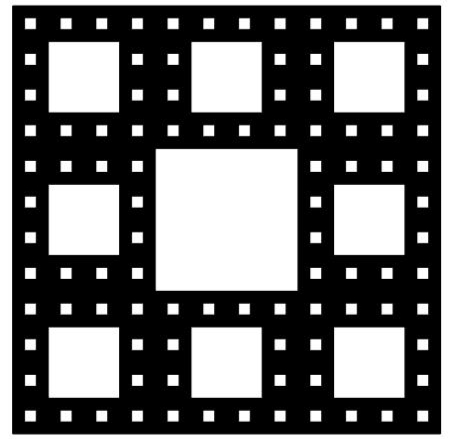}}
	\includegraphics[width=9cm]{sierpinski.jpg}
	\caption{The first three steps of the approximation graph of ~$F_{\sigma}$~}
	\label{F22}
\end{figure*}
\vspace{-5pt}

In this paper, we will study the intersection of the set ~$F_{\sigma}$~ and a straight line with a rational slope.Let~$\theta\in[0,\frac{\pi}{2})$~,such that~$\tan\theta$~ is rational number,that is ~$\tan\theta = \frac{N}{M}$~,where~$N,M\in\mathbb{Z}$~ ,define
\begin{align*}
	L_{a}:y = x\tan\theta + a.
\end{align*}
where~$a\in\mathbb{R}$~,then ~$L_{a}$~ is a straight line with a slope of ~$\tan\theta$~ and an intercept of ~$a$~. By panning the operation we can easily find that for any given ~$\theta\in[0,\frac{\pi}{2})$~,when~$a\in[-\tan\theta,1]$~,the intersection ~$F_{\sigma} \cap L_{a} \not= \emptyset$~,We remember
\begin{align*}
	J_{\theta} := [-\tan\theta,1]=[\frac{-M}{N},1].
\end{align*}

\section{The dimension of rational intercept of Sierpinski carpet with Moran type}
\subsection{Main result}
The set ~$F_{\sigma}$~ as defined in the \ref{def} section, is the set obtained by the action of the sequence ~$\sigma$~ control iterative function system ~${\Phi_{d}^{0}}_{d\in\Omega_{0}}$~ and ~${\Phi_{d}^{1}}_{d\in\Omega_{1}}$~ on the unit square in the two-dimensional plane; ~$L_{a}$~ is a straight line which ~$\theta \in [0,\frac{\pi}{2})$~ is the inclination angle, and ~$a \in J_{\theta}$~ is the intercept, and the slope ~$\tan\theta = \frac{M}{N}$~ is a rational number, before giving the conclusion about the upper and lower box dimensions of the intersection ~$F_{\sigma} \cap L_{a}$~, let's define ~$(N+M)  \times (N+M)$~ order matrix
\begin{equation*}
	\begin{aligned}
		A_{0}^{j} &= (c_{pq}^{j})_{1 \le p,q \le N+M},\\
		c_{pq}^{j} &= \sharp\{d \in \Omega_{0} : d^{1}M - d^{2}N = 2M + 2 + q - 3p - j \}.
	\end{aligned}
\end{equation*}
where~$j \in \{0,1,2\}$~.Similarly,define ~$(N+M) \times (N+M)$~ order matrix
\begin{equation*}
	\begin{aligned}
		A_{1}^{j} &= (d_{pq}^{j})_{1 \le p,q \le N+M},\\
		d_{pq}^{j} &= \sharp\{d \in \Omega_{1} : d^{1}M - d^{2}N = 3M + 3 + q - 3p - j \}.
	\end{aligned}
\end{equation*}
where~$j \in \{0,1,2,3\}$~.

\begin{theorem}\label{T1}
	Given~$\theta \in [0,\frac{\pi}{2})$~,~$\sigma$~is a 0,1 sequence,for ~$\forall a \in J_{\theta}$~,exists ~$k \in \{1,2,\ldots,N+M\}$~,such that~$a \in [\frac{-M-1+k}{N},\frac{-M+k}{N})$~,and
	\begin{align*}
		a = \frac{-M-1+k}{N} + \frac{1}{N}\sum_{i=1}^{\infty}\frac{\xi_{i}}{3^{n_{0}(i)}4^{n_{1}(i)}}.
	\end{align*}
	where~$\xi_{i} \in \{0,1,2,3\}$~,~$(\xi_{i})$~ take the greedy exhibition,then we have
	\begin{align*}
		\overline{\dim}_{B}(F_{\sigma}\cap L_{a}) &= \limsup_{k\to\infty}
		\frac{\log \|e_{i_{0}(a)}A_{\sigma_{1}}^{\xi_{1}}A_{\sigma_{2}}^{\xi_{2}}\ldots A_{\sigma_{k}}^{\xi_{k}}\|}{n_{0}(k)\log3+n_{1}(k)\log4},\\
		\underline{\dim}_{B}(F_{\sigma} \cap L_{a}) &= \liminf_{k\to\infty}\frac{\log\|e_{i_{0}(a)}A_{\sigma_{1}}^{\xi_{1}}A_{\sigma_{2}}^{\xi_{2}}\ldots A_{\sigma_{k}}^{\xi_{k}}\|}{n_{0}(k)\log3+n_{1}(k)\log4}.
	\end{align*}
	where~$e_{i_{0}(a)}$~ is the ~$i_{0}(a)th$~ natural basis of ~$\mathbb{R}^{N+M}$~, ~$\|\|$~is the norm of the matrix,which represents the sum of all the elements in the matrix.
\end{theorem}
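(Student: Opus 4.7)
\medskip

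\noindent\textbf{Proof proposal.} The plan is to reduce the computation of the upper and lower box dimensions of $F_{\sigma}\cap L_a$ to counting, at each level $k$, the number $N_k(a)$ of small squares of $F_k$ that meet $L_a$, and then to show that this count is exactly the matrix norm appearing in the statement. Since each square of $F_k$ has side length $3^{-n_0(k)}4^{-n_1(k)}$, the standard equivalence between box dimension and covering by dyadic-like grids will give
\begin{align*}
\overline{\dim}_B(F_\sigma\cap L_a)=\limsup_{k\to\infty}\frac{\log N_k(a)}{n_0(k)\log 3+n_1(k)\log 4},
\end{align*}
and the analogous $\liminf$ for $\underline{\dim}_B$. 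So the whole content of the theorem reduces to proving the identity $N_k(a)=\|e_{i_0(a)}A_{\sigma_1}^{\xi_1}\cdots A_{\sigma_k}^{\xi_k}\|$.

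To get the matrix recursion, I would track how the line sits inside each surviving square after one zoom. If $\Phi_d^{\sigma_{k+1}}$ is applied, then pulling back $L_a$ by $(\Phi_d^{\sigma_{k+1}})^{-1}$ multiplies the slope unchanged and transforms the intercept affinely by a factor of $3$ or $4$ plus $(d^1M-d^2N)/N$. Because the slope $M/N$ is rational, the quantity $Mx-Ny$ on the unit square takes values in $[-N,M]$, partitioning the square into $N+M$ parallel strips indexed by $p\in\{1,\dots,N+M\}$. The strip index $p$ of the pre-image square, together with the integer shift forced on the new intercept so that it lands back in $J_\theta$, determines the strip index $q$ of the image square. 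A direct count of which digits $d\in\Omega_0$ (resp.\ $\Omega_1$) realize the transition $p\mapsto q$ under shift $j$ is precisely the defining equation of $c_{pq}^j$ (resp.\ $d_{pq}^j$). Thus $A_{\sigma_{k+1}}^{j}$ is the transfer matrix for the step $k\mapsto k+1$ once the shift $j$ at that step is known.

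It remains to match the shift sequence $(j_k)$ with the greedy digits $(\xi_k)$. Writing $a=\frac{-M-1+i_0(a)}{N}+\frac{1}{N}\sum_{i\ge1}\xi_i/(3^{n_0(i)}4^{n_1(i)})$ is a mixed-radix expansion whose $k$-th digit is exactly the integer part of the rescaled intercept after $k$ zooms, so $\xi_k$ is forced to equal the shift $j_k$ needed to return the pulled-back intercept to $J_\theta$. The initial vector $e_{i_0(a)}$ records that at level $0$ the line sits in exactly one strip, namely the one identified by the interval $[\frac{-M-1+i_0(a)}{N},\frac{-M+i_0(a)}{N})$ containing $a$. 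An induction on $k$ then yields
\begin{align*}
N_k(a)=\bigl\|e_{i_0(a)}A_{\sigma_1}^{\xi_1}A_{\sigma_2}^{\xi_2}\cdots A_{\sigma_k}^{\xi_k}\bigr\|,
\end{align*}
because the $\|\cdot\|$-norm is the sum of all entries and summing rows of the product exactly aggregates the surviving squares across all strip indices. Combining with the first display finishes the proof.

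\smallskip

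\noindent\textbf{Main obstacle.} The delicate point is step two: correctly defining the $(N+M)$ strip indices, verifying that they are invariant under the IFS pull-back up to an explicit integer shift, and then checking that the arithmetic condition $d^1M-d^2N=2M+2+q-3p-j$ (respectively $3M+3+q-3p-j$) in the definition of $c_{pq}^j$ and $d_{pq}^j$ is not an artefact but exactly the transition rule produced by the geometry. Once this bookkeeping is nailed down, the induction and the passage to $\limsup/\liminf$ are routine.
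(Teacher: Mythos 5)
Your proposal follows essentially the same route as the paper: reduce the box dimensions to the square-count $N_k(a)$ at level $k$, encode the intersecting squares by a transfer matrix acting on the $N+M$ residue classes of the intercept modulo $\frac{1}{N}$ (the paper's set $\Gamma_a$ and matrices $B_{\sigma_i}$), and then identify the integer shift at step $i$ with the greedy digit $\xi_i$ so that $B_{\sigma_i}(3^{n_0(i-1)}4^{n_1(i-1)}a)=A_{\sigma_i}^{\xi_i}$. The bookkeeping you flag as the main obstacle is exactly what the paper carries out in its Lemmas~\ref{LL32} and~\ref{LL33}, and your outline is consistent with that verification.
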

\subsection{The process of proof}
Given~$\theta \in [0,\frac{\pi}{2})$~,set ~$T_{d}^{0}:J_{\theta} \to J_{\theta}$~ and ~$T_{d}^{1}:J_{\theta} \to J_{\theta}$~ as follow
\begin{align*}
	\bigg\{ T_{d}^{0}(x)&=3x+d^{1}\frac{M}{N}-d^{2} \bigg\}_{d\in\Omega_{0}},\\
	\bigg\{ T_{d}^{1}(x)&=4x+d^{1}\frac{M}{N}-d^{2} \bigg\}_{d\in\Omega_{1}}.
\end{align*}
and set
\begin{align*}
	\Big\{ S_{d}^{0}(x)&=(T_{d}^{0})^{-1}:J_{\theta} \to J_{\theta} \Big\}_{d\in\Omega_{0}}\\
	\Big\{ S_{d}^{1}(x)&=(T_{d}^{1})^{-1}:J_{\theta} \to J_{\theta} \Big\}_{d\in\Omega_{1}}.
\end{align*}
~$\{S_{d}^{0}\}_{d\in\Omega_{0}}$~,~$\{S_{d}^{1}\}_{d\in\Omega_{1}}$ ~portray ~$\{S_{d}^{0}(J_{\theta})\}_{d\in\Omega_{0}}$~and~$\{S_{d}^{1}(J_{\theta})\}_{d\in\Omega_{1}}$~ correspondence with the small square of the first layer.If we project the ~$3^{n_{0} (1)}4^{n_{1}(1)}$~ small squares of the first layer along the straight line ~$L_{a}$~ onto the ~$y$~ axis,and number each small square on the first layer with ~$d$~,Let's assume~$\sigma_{1}=0$~,then~$d \in \Omega_{0}$~,we can find that ~$S_{d}^{0}(J_{\theta})$~ corresponds to a small square numbered ~$d$~;Similarly,if~$\sigma_{1}=1$~,then~$d \in \Omega_{1}$~,we also have that~$S_{d}^{1}(J_{\theta})$~ corresponds to the small square with numbered ~$d$~.As figure\ref{F31} described.
\begin{figure*}[!ht]
	% Requires \usepackage{graphicx}
	\centering
	~~~~~
	%\scalebox{0.7}{0.7}{includegraphics[width=14cm]{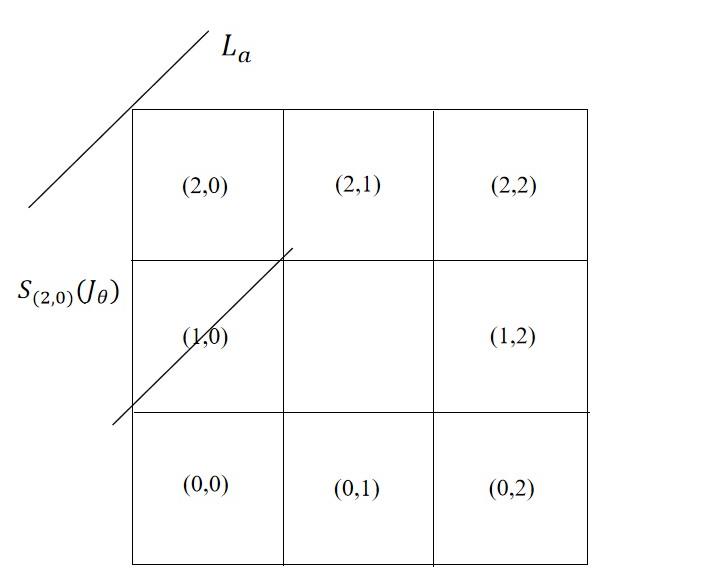}}
	\includegraphics[width=12cm]{shuoming.jpg}
	\caption{Mapping the function of the ~$\{S_{d}^{0}\}_{d\in\Omega_{0}}$~}
	\label{F31}
\end{figure*}
\vspace{-5pt}
We also find that
\begin{align*}
	J_{\theta}=\bigcup_{d \in \Omega_{0}}S_{d}^{0}(J_{\theta})=\bigcup_{d \in \Omega_{1}}S_{d}^{1}(J_{\theta}).
\end{align*}
We remember
\begin{align*}
	S_{d_{1}d_{2}\ldots d_{k}}^{\sigma_{1}\sigma_{2}\ldots\sigma_{k}}&=S_{d_{1}}^{\sigma_{1}}\circ S_{d_{2}}^{\sigma_{2}}\circ\ldots S_{d_{k}}^{\sigma_{k}},\\
	T_{d_{1}d_{2}\ldots d_{k}}^{\sigma_{1}\sigma_{2}\ldots\sigma_{k}}&=T_{d_{k}}^{\sigma_{k}}\circ T_{d_{k-1}}^{\sigma_{k-1}}\circ\ldots T_{d_{1}}^{\sigma_{1}}.
\end{align*}
Define
\begin{align*}
	K_{n}^{\sigma}(a)=\sharp\{d_{1}d_{2}\ldots d_{n}:\Phi_{d_{1}d_{2}\ldots d_{n}}^{\sigma_{1}\sigma_{2}\ldots\sigma_{n}}(F_{\sigma})\cap L_{a}\ne \emptyset\}
\end{align*}
By the definition of box dimension,we have
\begin{align*}
	\overline{\dim}_{B}(F_{\sigma}\cap L_{a}) &= \limsup_{k\to\infty}
	\frac{\log K_{k}^{\sigma}(a)}{n_{0}(k)\log3+n_{1}(k)\log4},\\
	\underline{\dim}_{B}(F_{\sigma} \cap L_{a}) &= \liminf_{k\to\infty}
	\frac{\log K_{k}^{\sigma}(a)}{n_{0}(k)\log3+n_{1}(k)\log4}.
\end{align*}
Define
\begin{align*}
	N_{n}^{\sigma}(a)=\sharp\{d_{1}d_{2}\ldots d_{n}:\Phi_{d_{1}d_{2}\ldots d_{n}}^{\sigma_{1}\sigma_{2}\ldots\sigma_{n}}(F_{0})\cap L_{a} \ne \emptyset\}.
\end{align*}
Since~$\Phi_{d_{1}d_{2}\ldots d_{k}}^{\sigma_{1}\sigma_{2}\ldots\sigma_{n}}(F_{0})\cap L_{a} \ne \emptyset$~ is equal to~$\Phi_{d_{1}d_{2}\ldots d_{k}}^{\sigma_{1}\sigma_{2}\ldots\sigma_{n}}(F_{\sigma})\cap L_{a}\ne \emptyset$~,then we have
\begin{align}
	\overline{\dim}_{B}(F_{\sigma}\cap L_{a}) &= \limsup_{k\to\infty}
	\frac{\log N_{k}^{\sigma}(a)}{n_{0}(k)\log3+n_{1}(k)\log4},\label{EQ319} \\
	\underline{\dim}_{B}(F_{\sigma} \cap L_{a}) &= \liminf_{k\to\infty}
	\frac{\log N_{k}^{\sigma}(a)}{n_{0}(k)\log3+n_{1}(k)\log4}.
\end{align}
The idea of our proof is to find the relationships ~$N_{n}^{\sigma}(a)$~ with ~$\|A_{\sigma_{1}}^{\xi_{1}}A_{\sigma_{2}}^{\xi_{2}}\ldots A_{\sigma_{n}}^{\xi_{n}}\|$~,The proof of the upper and lower box dimensions is consistent, so below we will only prove the upper box dimension.
\begin{lemma} \label{LL31}
	Given~$\theta\in [o,\frac{\pi}{2})$~,for all ~$a \in J_{\theta}$~,we have
	\begin{equation*}
		\begin{aligned}
			N_{n}^{\sigma}(a)&= \sharp\{d_{1}d_{2}\ldots d_{n} \in \Omega_{\sigma_{1}\sigma_{2}\ldots\sigma_{n}}:a \in S_{d_{1}d_{2}\ldots d_{n}}^{\sigma_{1}\sigma_{2}\ldots\sigma_{n}}\} \\
			&= \sharp\{d_{1}d_{2}\ldots d_{n} \in \Omega_{\sigma_{1}\sigma_{2}\ldots\sigma_{n}}:T_{d_{1}d_{2}\ldots d_{n}}^{\sigma_{1}\sigma_{2}\ldots\sigma_{n}}(a) \in J_{\theta}\}.
		\end{aligned}
	\end{equation*}
\end{lemma}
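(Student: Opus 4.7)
\bigskip

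\noindent\textbf{Proof proposal.} The plan is to translate the geometric condition ``the small square $\Phi_{d_1\ldots d_n}^{\sigma_1\ldots\sigma_n}(F_0)$ meets the line $L_a$'' into an equivalent condition on the intercept only, namely ``$T_{d_1\ldots d_n}^{\sigma_1\ldots\sigma_n}(a)\in J_\theta$''. Since $J_\theta$ was defined precisely as the set of intercepts $a$ for which $L_a\cap [0,1]^2\neq\emptyset$, the lemma will follow once I show that pulling the line $L_a$ back through the IFS map $\Phi_{d}^{\sigma}$ produces another line with the same rational slope $\tfrac{M}{N}$, but with a new intercept given exactly by $T_d^\sigma(a)$.

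First, I would compute the single-step pullback. For $\sigma_1=0$, $(\Phi_d^{0})^{-1}(x,y)=(3x-d^1,3y-d^2)$. Substituting into $y=\tfrac{M}{N}x+a$ and simplifying gives
\begin{align*}
(\Phi_d^{0})^{-1}(L_a)\;=\;L_{3a+d^1\frac{M}{N}-d^2}\;=\;L_{T_d^0(a)}.
\end{align*}
The analogous calculation with the contraction ratio $\tfrac14$ yields $(\Phi_d^{1})^{-1}(L_a)=L_{T_d^1(a)}$. Hence in one step, $\Phi_d^{\sigma_1}(F_0)\cap L_a\neq\emptyset$ if and only if $F_0\cap L_{T_d^{\sigma_1}(a)}\neq\emptyset$, i.e.\ $T_d^{\sigma_1}(a)\in J_\theta$.

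Next I would iterate. Since $\Phi_{d_1\ldots d_n}^{\sigma_1\ldots\sigma_n}=\Phi_{d_1}^{\sigma_1}\circ\cdots\circ\Phi_{d_n}^{\sigma_n}$, inversion reverses the order, so $n$-fold application of the above identity gives
\begin{align*}
(\Phi_{d_1\ldots d_n}^{\sigma_1\ldots\sigma_n})^{-1}(L_a)\;=\;L_{T_{d_n}^{\sigma_n}\circ T_{d_{n-1}}^{\sigma_{n-1}}\circ\cdots\circ T_{d_1}^{\sigma_1}(a)}\;=\;L_{T_{d_1\ldots d_n}^{\sigma_1\ldots\sigma_n}(a)},
\end{align*}
by the definition of $T_{d_1\ldots d_n}^{\sigma_1\ldots\sigma_n}$. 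This is the crux: the reversed composition order in the paper's definition of $T_{d_1\ldots d_n}^{\sigma_1\ldots\sigma_n}$ is exactly what is needed to match the order in which the inverses act. Therefore $\Phi_{d_1\ldots d_n}^{\sigma_1\ldots\sigma_n}(F_0)\cap L_a\neq\emptyset$ is equivalent to $F_0\cap L_{T_{d_1\ldots d_n}^{\sigma_1\ldots\sigma_n}(a)}\neq\emptyset$, which by the defining property of $J_\theta$ is the condition $T_{d_1\ldots d_n}^{\sigma_1\ldots\sigma_n}(a)\in J_\theta$. This establishes the second equality.

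Finally, the first equality is a formal rewriting: since $S_{d_1\ldots d_n}^{\sigma_1\ldots\sigma_n}=(T_{d_1\ldots d_n}^{\sigma_1\ldots\sigma_n})^{-1}$ (which one verifies by checking that $(T_{d_n}^{\sigma_n}\circ\cdots\circ T_{d_1}^{\sigma_1})^{-1}=S_{d_1}^{\sigma_1}\circ\cdots\circ S_{d_n}^{\sigma_n}$), the membership $T_{d_1\ldots d_n}^{\sigma_1\ldots\sigma_n}(a)\in J_\theta$ is the same as $a\in S_{d_1\ldots d_n}^{\sigma_1\ldots\sigma_n}(J_\theta)$. I expect no real obstacle beyond bookkeeping; the only delicate point is keeping the composition order of the $T_{d_i}^{\sigma_i}$ consistent with the way inverses stack, and making sure the affine identity $(\Phi_d^{\sigma})^{-1}(L_a)=L_{T_d^\sigma(a)}$ is written out carefully once, so that the induction is clean.
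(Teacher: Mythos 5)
Your proposal is correct and amounts to essentially the same argument as the paper's: both reduce the incidence condition $\Phi_{d_1\ldots d_n}^{\sigma_1\ldots\sigma_n}(F_0)\cap L_a\neq\emptyset$ to the affine identity linking $\Phi$, $T$ and $S$, the paper by projecting the two extreme corners of the sub-square along the line direction to get the intercept interval $[S_{d_1\ldots d_n}^{\sigma_1\ldots\sigma_n}(-\tan\theta),S_{d_1\ldots d_n}^{\sigma_1\ldots\sigma_n}(1)]=S_{d_1\ldots d_n}^{\sigma_1\ldots\sigma_n}(J_\theta)$, and you by the dual computation that pulls $L_a$ back to $L_{T_{d_1\ldots d_n}^{\sigma_1\ldots\sigma_n}(a)}$ and invokes $L_b\cap F_0\neq\emptyset\iff b\in J_\theta$. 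Your observation that the reversed composition order in the definition of $T_{d_1\ldots d_n}^{\sigma_1\ldots\sigma_n}$ is exactly what makes the iteration work is the right bookkeeping point, and the induction is clean.
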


\begin{proof}
	Since the definition of ~$\{S_{d}^{0}\}_{d \in \Omega_{0}}$~ and ~$\{S_{d}^{1}\}_{d \in \Omega_{1}}$~,it is easy to find that
	\begin{align*}
		N_{n}^{\sigma}(a) &= \sharp\{d_{1}d_{2}\ldots d_{n}:\Phi_{d_{1}d_{2}\ldots d_{n}}^{\sigma_{1}\sigma_{2}\ldots\sigma_{n}}(F_{0})\cap L_{a} \ne \emptyset\} \\
		&= \sharp\{d_{1}d_{2}\ldots d_{n} \in \Omega_{\sigma_{1}\sigma_{2}\ldots\sigma_{n}}:a \in S_{d_{1}d_{2}\ldots d_{n}}^{\sigma_{1}\sigma_{2}\ldots\sigma_{n}}(J_{\theta})\} \\
		&= \sharp\{d_{1}d_{2}\ldots d_{n} \in \Omega_{\sigma_{1}\sigma_{2}\ldots\sigma_{n}}:T_{d_{1}d_{2}\ldots d_{n}}^{\sigma_{1}\sigma_{2}\ldots\sigma_{n}}(a) \in J_{\theta}\}.
	\end{align*}
	In fact,for all ~$d_{1}d_{2}\ldots d_{n}$~ such that ~$\Phi_{d_{1}d_{2}\ldots d_{n}}^{\sigma_{1}\sigma_{2}\ldots\sigma_{n}}(F_{0})\cap L_{a} \ne \emptyset$~,we remember
	\begin{align*}
		(x_{1},y_{1})=\Phi_{d_{1}d_{2}\ldots d_{k}}^{\sigma_{1}\sigma_{2}\ldots\sigma_{n}}((0,1)),\\
		(x_{2},y_{2})=\Phi_{d_{1}d_{2}\ldots d_{k}}^{\sigma_{1}\sigma_{2}\ldots\sigma_{n}}((1,0)).
	\end{align*}
	then we have
	\begin{align*}
		\Phi_{d_{1}d_{2}\ldots d_{n}}^{\sigma_{1}\sigma_{2}\ldots\sigma_{n}}(F_{0})\cap L_{a} \ne \emptyset\quad
		\Longleftrightarrow \quad a \in [y_{2}-\tan\theta x_{2},y_{1}-\tan\theta x_{1}].
	\end{align*}
	Since~$\{S_{d}^{0}(x) =(T_{d}^{0})^{-1}:J_{\theta} \to J_{\theta}\}_{d\in\Omega_{0}}$~,we can find that ~$S_{d}^{0}(x)=\frac{1}{3}x-\frac{1}{3}d^{1}\tan\theta+\frac{1}{3}d^{2},d \in \Omega_{0}$~.Similarly,~$S_{d}^{1}(x)=\frac{1}{4}x-\frac{1}{4}d^{1}\tan\theta+\frac{1}{4}d^{2},d \in \Omega_{1}$~,and then by the definition of the iterative function system~$\{\Phi_{d}^{0}\}_{d\in\Omega_{0}}$~ and ~$\{\Phi_{d}^{1}\}_{d\in\Omega_{1}}$~,we have
	\begin{align*}
		y_{2}-\tan\theta x_{2}
		&= \frac{(d_{1}^{2}-d_{1}^{1}\tan\theta)
			+\ldots+3^{n_{0}(n-1)}4^{n_{1}(n-1)}(d_{n}^{2}-d_{n}^{1}\tan\theta)}{3^{n_{0}(n)}4^{n_{1}(n)}}\\
		&=S_{d_{1}d_{2}\ldots d_{n}}^{\sigma_{1}\sigma_{2}\ldots\sigma_{n}}(-\tan\theta).
	\end{align*}
	Similarly,we can calculated that
	\begin{align*}
		y_{1}-\tan\theta x_{1} = S_{d_{1}d_{2}\ldots d_{n}}^{\sigma_{1}\sigma_{2}\ldots\sigma_{n}}(1).
	\end{align*}
	then we have
	\begin{align*}
		a \in S_{d_{1}d_{2}\ldots d_{n}}^{\sigma_{1}\sigma_{2}\ldots\sigma_{n}}(J_{\theta})\quad\Longleftrightarrow\quad\Phi_{d_{1}d_{2}\ldots d_{k}}^{\sigma_{1}\sigma_{2}\ldots\sigma_{n}}(F_{0})\cap L_{a} \ne \emptyset.
	\end{align*}
	This is where the lemma holds.
\end{proof}
Next,given ~$\theta\in[0,\frac{\pi}{2})$~,for all~$a \in J_{\theta}$~,deneted ~$\Gamma_{a}$~ as
\begin{align*}
	\Gamma_{a}=\bigg\{a+\frac{i}{N} \in J_{\theta}:i \in \mathbb{Z} \bigg\}.
\end{align*}
It is obvious that ~$\Gamma_{a}$~ has ~$N+M$~ element,that is ~$\sharp\Gamma_{a}=N+M$~. We arrange the elements of ~$\Gamma_{a}$~ as follow
\begin{align*}
	\Gamma_{a}(1)<\Gamma_{a}(2)<\ldots<\Gamma_{a}(N+M).
\end{align*}
where
\begin{align*}
	\Gamma_{a}(i) \in (\frac{-M-1+i}{N},\frac{-M+i}{N}),(1\le i \le N+M).
\end{align*}
We set
\begin{align*}
	I_{i}=(\frac{-M-1+i}{N},\frac{-M+i}{N}).
\end{align*}
Next,We would like to use a ~$(N+M) \times (N+M)$~ order matrix to record the small squares of the ~$k$~ layer,that is~$\Phi_{d_{1}d_{2}\ldots d_{k}}^{\sigma_{1}\sigma_{2}\ldots\sigma_{k}}(F_{0})$~,these small squares satisfy ~$\Phi_{d_{1}d_{2}\ldots d_{k}}^{\sigma_{1}\sigma_{2}\ldots\sigma_{k}}(F_{0}) \cap L_{a} \ne \emptyset$~. Define
\begin{equation*}
	\begin{aligned}
		B_{0}(a) &= (c_{pq})_{1 \le p,q \le N+M},\\
		c_{pq} &= \sharp\{d \in \Omega_{0} : T_{d}^{0}(\Gamma_{a}(p)) = \Gamma_{3a}(q) \}.
	\end{aligned}
\end{equation*}
And define
\begin{equation*}
	\begin{aligned}
		B_{1}(a) &= (d_{pq})_{1 \le p,q \le N+M},\\
		d_{pq} &= \sharp\{d \in \Omega_{1} : T_{d}^{1}(\Gamma_{a}(p)) = \Gamma_{4a}(q) \}.
	\end{aligned}
\end{equation*}
Then we have the lemme as follow.
\begin{lemma} \label{LL32}
	Given~$\theta\in[0,\frac{\pi}{2})$~,for all ~$a \in J_{\theta}$~,~$n \in \mathbb{N}$~,we have
	\begin{align*}
		N_{n}^{\sigma}(a)=\|e_{i_{0}(a)}B_{\sigma_{1}}(a)B_{\sigma_{2}}(3^{n_{0}(1)}4^{n_{1}(1)}a)\ldots B_{\sigma_{n}}(3^{n_{0}(n-1)}4^{n_{1}(n-1)}a)\|
	\end{align*}
	where,~$i_{0}(a) \in \{1,2,\ldots,N+M\}$~ and ~$a=\Gamma_{a}(i_{0}(a))$~.
\end{lemma}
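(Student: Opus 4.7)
The plan is to proceed by induction on $n$, strengthening the statement so we track not merely the total count $N_n^\sigma(a)$ but the distribution of the terminal values $T_{d_1 d_2 \ldots d_n}^{\sigma_1 \sigma_2 \ldots \sigma_n}(a)$ among the $N+M$ positions of the grid $\Gamma_{a_n}$, where I abbreviate $a_k := 3^{n_0(k)} 4^{n_1(k)} a$ (so $a_0 = a$).

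First I would verify the lattice-preservation property that underlies the definitions of $B_0(a)$ and $B_1(a)$. A direct computation gives $T_d^0(a_{k-1} + i/N) = a_k + (3i + d^1 M - d^2 N)/N$ when $\sigma_k = 0$, and $T_d^1(a_{k-1} + i/N) = a_k + (4i + d^1 M - d^2 N)/N$ when $\sigma_k = 1$. Hence each $T_d^{\sigma_k}$ maps the affine lattice $a_{k-1} + (1/N)\mathbb{Z}$ into $a_k + (1/N)\mathbb{Z}$, so combined with Lemma \ref{LL31} the sequences $(d_1, \ldots, d_n)$ contributing to $N_n^\sigma(a)$ can be organized according to which element of $\Gamma_{a_n}$ the image $T_{d_1 \ldots d_n}^{\sigma_1 \ldots \sigma_n}(a)$ hits.

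Set $v_k := e_{i_0(a)} B_{\sigma_1}(a_0) B_{\sigma_2}(a_1) \cdots B_{\sigma_k}(a_{k-1}) \in \mathbb{R}^{N+M}$. I claim, by induction on $k$, that the $q$-th entry of $v_k$ equals
\begin{align*}
\sharp\bigl\{(d_1, \ldots, d_k) \in \Omega_{\sigma_1 \ldots \sigma_k} : T_{d_1 d_2 \ldots d_k}^{\sigma_1 \sigma_2 \ldots \sigma_k}(a) = \Gamma_{a_k}(q) \bigr\}.
\end{align*}
The base case $k = 0$ is just $a = \Gamma_a(i_0(a))$. For the inductive step, by the definition of matrix multiplication and of $B_{\sigma_k}(a_{k-1})$, the $q$-th entry of $v_k = v_{k-1} B_{\sigma_k}(a_{k-1})$ equals
\begin{align*}
\sum_{p=1}^{N+M} (v_{k-1})_p \cdot \sharp\{d \in \Omega_{\sigma_k} : T_d^{\sigma_k}(\Gamma_{a_{k-1}}(p)) = \Gamma_{a_k}(q)\},
\end{align*}
and by the inductive hypothesis this counts exactly those $(d_1, \ldots, d_k)$ for which the partial image reaches some $\Gamma_{a_{k-1}}(p)$ and then $T_{d_k}^{\sigma_k}$ maps that to $\Gamma_{a_k}(q)$.

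Finally, summing all entries of $v_n$ counts the sequences whose image lies somewhere in $\Gamma_{a_n} \subset J_\theta$, which by Lemma \ref{LL31} is precisely $N_n^\sigma(a)$. The main obstacle is the bookkeeping: one must confirm that the defining relation $T_d^{\sigma_k}(\Gamma_{a_{k-1}}(p)) = \Gamma_{a_k}(q)$ really captures every single-step transition on the grid (that is, that no partial image escapes the lattice $a_{k-1} + (1/N)\mathbb{Z}$ and no two distinct $d$'s are conflated), and this is exactly what the lattice-preservation computation of the first paragraph guarantees. Everything else is formal matrix algebra driven by the induction.
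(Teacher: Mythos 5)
Your proposal is correct and follows essentially the same route as the paper: establish that each $T_d^{\sigma_k}$ carries the lattice $\Gamma_{a_{k-1}}$ into $\Gamma_{a_k}$ whenever the image stays in $J_\theta$, then identify the entries of the matrix product with counts of digit sequences landing on each grid point, and sum via Lemma \ref{LL31}. Your explicitly stated vector-valued induction hypothesis (that the $q$-th entry of $v_k$ counts sequences reaching $\Gamma_{a_k}(q)$) is in fact a cleaner formulation of the recursion the paper only illustrates for $n=1$ and $n=2$ before asserting the general case.
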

\begin{proof}
	We use a recursive method to prove the lemma.
	\par First,if ~$b \in \Gamma_{a}$~,and ~$T_{d}^{0}(b) \in J_{\theta}$~,then there exists ~$i \in \{1,2,\ldots,N+M\}$~ such that ~$b=a+\frac{i}{N}$~,since ~$\tan\theta=\frac{M}{N}$~ is rational number,then we have
	\begin{align*}
		T_{d}^{0}(b)&=3d+d^{1}\frac{M}{N}-d^{2}\\
		&=3(a+\frac{i}{N})+d^{1}\frac{M}{N}-d^{2}\\
		&=3a+\frac{3i+d^{1}M-d^{2}N}{N}.
	\end{align*}
	Note that~$T_{d}^{0}(b) \in J_{\theta}$~,and ~$3i+d^{1}M-d^{2}N$~is an integer,then by the dedinition of ~$\Gamma_{3a}$~,we have~$T_{d}^{0}(b) \in \Gamma_{3a}$~. So we claim:if~$b \in \Gamma_{a}$~,and~$T_{d}^{0}(b) \in J_{\theta}$~,then we have~$T_{d}^{0}(b) \in \Gamma_{3a}$~;similarly,if~$b \in \Gamma_{a}$~,and~$T_{d}^{1}(b) \in J_{\theta}$~,then we have~$T_{d}^{1}(b) \in \Gamma_{4a}$~.
	\par Furthermore,if~$d \in \Omega_{0}$~ such that ~$T_{d}^{0}(\Gamma_{a}(p)) = \Gamma_{3a}(q)$~,It means:~$\Gamma_{a}(p)=S_{d}^{0}(\Gamma_{3a}(q)) \in S_{d}^{0}(J_{\theta})$~;similarly,if~$d \in \Omega_{1}$~ such that ~$T_{d}^{1}(\Gamma_{a}(p)) = \Gamma_{4a}(q)$~,it means that~$\Gamma_{a}(p) \in S_{d}^{1}(J_{\theta})$~.
	\par when ~$n=1$~,by lemma\ref{LL31},we can find that
	\begin{align*}
		N_{1}^{\sigma}(a) = \sharp\{d_{1} \in \Omega_{\sigma_{1}}:a \in S_{d_{1}}^{\sigma_{1}}(J_{\theta})\}
		= \sharp\{d_{1} \in \Omega_{\sigma_{1}}:T_{d_{1}}^{\sigma_{1}}(a) \in J_{\theta}\}.
	\end{align*}
	For all ~$d_{1} \in \{d_{1} \in \Omega_{\sigma_{1}}:T_{d_{1}}^{\sigma_{1}}(a) \in J_{\theta}\}$~,we have ~$d_{1} \in \Omega_{\sigma_{1}}$~,and ~$T_{d_{1}}^{\sigma{1}}(a) \in J_{\theta}$~.Note that~$a \in \Gamma_{a}$~,it means that there exists ~$i_{0}(a) \in \{1,2,\ldots,N+M\}$~ such that ~$a=\Gamma_{a}(i_{0}(a))$~,Let's consider ~$\sigma_{1}=0$~ and ~$\sigma_{1}=1$~.
	\par (i)When ~$\sigma_{1} = 0$~,the~$T_{d_{1}}^{0}(a) \in J_{\theta}$~,since the claim we can fine that ~$T_{d_{1}}^{0}(a) \in \Gamma_{3a}$~,it means that there exists ~$q \in \{1,2,\ldots,N+M\}$~ such that~$T_{d_{1}}^{0}(a) = \Gamma_{3a}(q)$~,then we have ~$N_{1}^{\sigma}(a) \le\sharp\{d_{1} \in \Omega_{0}:\exists q \in \{1,2,\ldots,N+M\},s.t. T_{d_{1}}^{0}(a) = \Gamma_{3a}(q)\}$~. Furthermore,if there exists~$q \in \{1,2,\ldots,N+M\}$~ such that ~$T_{d_{1}}^{0}(a) = \Gamma_{3a}(q)$~,then we have~$a \in S_{d_{1}}^{0}(J_{\theta})$~,it means that~$N_{1}^{\sigma}(a) \ge\sharp\{d_{1} \in \Omega_{0}:\exists q \in \{1,2,\ldots,N+M\},s.t. T_{d_{1}}^{0}(a) = \Gamma_{3a}(q)\}$~,then we have
	\begin{align*}
		N_{1}^{\sigma}(a) &=\sharp\{d_{1} \in \Omega_{0}:\exists q \in \{1,2,\ldots,N+M\},s.t. T_{d_{1}}^{0}(a) = \Gamma_{3a}(q)\}\\
		&=\sum_{q=1}^{N+M}\sharp\{d \in \Omega_{0}:T_{d}^{0}(a)=\Gamma_{3a}(q)\}.
	\end{align*}
	Note that
	\begin{align*}
		\|e_{i_{0}(a)}B_{0}(a)\|&=\sum_{q=1}^{N+M}c_{i_{0}q}\\
		&=\sum_{q=1}^{N+M}\sharp\{d \in \Omega_{0}:T_{d}^{0}(\Gamma_{a}(i_{0}))=\Gamma_{3a}(q)\}\\
		&=\sum_{q=1}^{N+M}\sharp\{d \in \Omega_{0}:T_{d}^{0}(a)=\Gamma_{3a}(q)\}\\
		&=N_{1}^{\sigma}(a).
	\end{align*}
	\par (ii)When ~$\sigma_{1} = 1$~,similarly
	\begin{align*}
		\|e_{i_{0}(a)}B_{0}(a)\|=N_{1}^{\sigma}(a).
	\end{align*}
	It is concluded that the lemma holds when ~$n=1$~.
	\par When~$n=2$~,by the lemma\ref{LL31} we can find that
	\begin{align*}
		N_{2}^{\sigma}(a) = \sharp\{d_{1}d_{2} \in \Omega_{\sigma_{1}\sigma_{2}}:
		a \in S_{d_{1}d_{2}}^{\sigma_{1}\sigma_{2}}(J_{\theta})\}
		= \sharp\{d_{1}d_{2} \in \Omega_{\sigma_{1}\sigma_{2}}:
		T_{d_{1}d_{2}}^{\sigma_{1}\sigma_{2}}(a) \in J_{\theta}\}.
	\end{align*}
	For all~$d_{1}d_{2} \in \{d_{1}d_{2} \in \Omega_{\sigma_{1}\sigma_{2}}:T_{d_{1}d_{2}}^{\sigma_{1}\sigma_{2}}(a) \in J_{\theta}\}$~,it is easy to find that~$d_{1} \in \Omega_{\sigma{1}}$~,~$d_{2} \in \Omega_{\sigma{2}}$~ and~$T_{d_{1}d_{2}}^{\sigma_{1}\sigma_{2}}(a) \in J_{\theta}$~,Below we will also consider the situation, at this time there are four situations, we will only elaborate on two of them, and the rest of the cases are the same.
	\par (i)When ~$\sigma_{2}=0$~ and ~$\sigma_{1}=0$~,by ~$n=1$~,we have ~$T_{d_{1}}^{0}(a)\in \Gamma_{3a}$~.By the claim we can find that ~$T_{d_{1}d_{2}}^{00}(a) \in \Gamma_{9a}$~.So there exists ~$q \in \{1,2,\ldots,N+M\}$~ such that ~$T_{d_{1}d_{2}}^{00}(a) = \Gamma_{9a}(q)$~.Furthermore,if there exists ~$q \in \{1,2,\ldots,N+M\}$~ such that ~$T_{d_{1}d_{2}}^{00}(a) = \Gamma_{9a}(q)$~,it means that~$a \in S_{d_{1}d_{2}}^{00}(J_{\theta})$~.Then we have
	\begin{align*}
		N_{2}^{\sigma}(a) &=\sharp\{d_{1}d_{2} \in \Omega_{00}:\exists q \in \{1,2,\ldots,N+M\},s.t. T_{d_{1}d_{2}}^{00}(a) = \Gamma_{9a}(q)\}\\
		&=\sum_{q=1}^{N+M}\sharp\{d_{1}d_{2} \in \Omega_{00}:T_{d_{1}d_{2}}^{00}(a) = \Gamma_{9a}(q)\}\\
		&=\sum_{q_{1}=1}^{N+M}\sum_{q_{2}=1}^{N+M}\sharp\{d_{2} \in \Omega_{0}:T_{d_{2}}^{0}(\Gamma_{3a}(q_{1}))=\Gamma_{9a}(q_{2})\}.
	\end{align*}
	Note that
	\begin{align*}
		\|e_{i_{0}(a)}B_{0}(a)B_{0}(3a)\|&=\sum_{q_{1}=1}^{N+M}c_{i_{0}q_{1}}\sum_{q_{2}=1}^{N+M}c_{q_{1}q_{2}}^{'}\\
		&=\sum_{q_{1}=1}^{N+M}\sum_{q_{2}=1}^{N+M}\sharp\{d_{2} \in \Omega_{0}:T_{d_{2}}^{0}(\Gamma_{3a}(q_{1}))=\Gamma_{9a}(q_{2})\}\\
		&=N_{2}^{\sigma}(a).
	\end{align*}
	where,we remember~$B_{0}(a)=(c_{pq})_{1 \le p,q \le N+M}$~,~$B_{0}(3a)=(c_{pq}^{'})_{1 \le p,q \le N+M}$~.
	\par (ii)when ~$\sigma_{2}=0$~ and ~$\sigma_{1}=1$~,it is similar to find that
	\begin{align*}
		N_{2}^{\sigma}(a) &=\sharp\{d_{1}d_{2} \in \Omega_{01}:\exists q \in \{1,2,\ldots,N+M\},s.t. T_{d_{1}d_{2}}^{01}(a) = \Gamma_{12a}(q)\}\\
		&=\sum_{q=1}^{N+M}\sharp\{d_{1}d_{2} \in \Omega_{01}:T_{d_{1}d_{2}}^{01}(a) = \Gamma_{12a}(q)\}\\
		&=\sum_{q_{1}=1}^{N+M}\sum_{q_{2}=1}^{N+M}\sharp\{d_{2} \in \Omega_{0}:T_{d_{2}}^{0}(\Gamma_{4a}(q_{1}))=\Gamma_{12a}(q_{2})\}.
	\end{align*}
	Note that
	\begin{align*}
		\|e_{i_{0}(a)}B_{0}(a)B_{0}(4a)\|&=\sum_{q_{1}=1}^{N+M}c_{i_{0}q_{1}}\sum_{q_{2}=1}^{N+M}c_{q_{1}q_{2}}^{'}\\
		&=\sum_{q_{1}=1}^{N+M}\sum_{q_{2}=1}^{N+M}\sharp\{d_{2} \in \Omega_{0}:T_{d_{2}}^{0}(\Gamma_{4a}(q_{1}))=\Gamma_{12a}(q_{2})\}\\
		&=N_{2}^{\sigma}(a).
	\end{align*}
	For other cases, the same can be derived
	\begin{align*}
		\|e_{i_{0}(a)}B_{0}(a)B_{0}(3a)\|=N_{2}^{\sigma}(a).
	\end{align*}
	Therefore, when ~$n=2$~, the lemma holds.
	\par And so recursively, proved that 
	\begin{align*}
		N_{n}^{\sigma}(a)=\|e_{i_{0}(a)}B_{\sigma_{1}}(a)B_{\sigma_{2}}(3^{n_{0}(1)}4^{n_{1}(1)}a)\ldots B_{\sigma_{n}}(3^{n_{0}(n-1)}4^{n_{1}(n-1)}a)\|.
	\end{align*}
	where ~$i_{0}(a) \in \{1,2,\ldots,N+M\}$~ and ~$a=\Gamma_{a}(i_{0}(a))$~. The lemma is proved.
\end{proof}

We set
\begin{align*}
	J_{p}^{j}&=\bigg(\frac{-M-1+p}{N}+\frac{j}{3N},\frac{-M-1+p}{N}+\frac{j+1}{3N}\bigg),j\in\{0,1,2\},\\
	K_{p}^{j}&=\bigg(\frac{-M-1+p}{N}+\frac{j}{4N},\frac{-M-1+p}{N}+\frac{j+1}{4N}\bigg),j\in\{0,1,2,3\}.
\end{align*}
Define 
\begin{equation*}
	\begin{aligned}
		B_{0}^{j} &= (c_{pq}^{j})_{1 \le p,q \le N+M},\\
		c_{pq}^{j} &= \sharp\{d \in \Omega_{0} : T_{d}^{0}(J_{p}^{j}) = I_{q} \}.
	\end{aligned}
\end{equation*}
And define
\begin{equation*}
	\begin{aligned}
		B_{1}^{j} &= (d_{pq}^{j})_{1 \le p,q \le N+M},\\
		d_{pq}^{j} &= \sharp\{d \in \Omega_{1} : T_{d}^{1}(K_{p}^{j}) = I_{q} \}.
	\end{aligned}
\end{equation*}
if ~$d\in\Omega_{0}$~ and such that~$T_{d}^{0}(J_{p}^{j})=I_{q}$~,by the definition ~$T_{d}^{0}$~,we can find that
\begin{equation*}
	\begin{aligned}
		T_{d}^{0}(J_{p}^{j}) &= \bigg(\frac{-3M-3+3p+j+d^{1}M-d^{2}N}{N},\frac{-3M-3+3p+j+1+d^{1}M-d^{2}N}{N} \bigg)\\
		&= \bigg(\frac{-M-1+q}{N},\frac{-M+q}{N} \bigg)\\
		&= I_{q}.
	\end{aligned}
\end{equation*}
It means that
\begin{align*}
	-3M-3+3p+j+d^{1}M-d^{2}N &= -M-1+q.
\end{align*}
From this we have
\begin{align*}
	d^{1}M-d^{2}N &= 2M+2-3p+q-j.
\end{align*}
So
\begin{equation*}
	\begin{aligned}
		c_{pq}^{j} &= \sharp\{d \in \Omega_{0} : T_{d}^{0}(J_{p}^{j}) = I_{q} \}\\
		&= \sharp\{d \in \Omega_{0} : d^{1}M - d^{2}N = 2M + 2 + q - 3p - j \}.
	\end{aligned}
\end{equation*}
It means that ~$B_{0}^{j}=A_{0}^{j}$~,where~$j \in \{0,1,2\}$~;similarly,we have~$B_{1}^{j}=A_{1}^{j}$~,where~$j \in \{0,1,2,3\}$~.
\begin{lemma}  \label{LL33}
	Given~$\theta\in[0,\frac{\pi}{2})$~,for all~$a \in J_{\theta}$~,there exists~$k \in \{1,2,\ldots,N+M\}$~,such that~$a \in [\frac{-M-1+k}{N},\frac{-M+k}{N})$~,and
	\begin{align*}
		a = \frac{-M-1+k}{N} + \frac{1}{N}\sum_{i=1}^{\infty}\frac{\xi_{i}}{3^{n_{0}(i)}4^{n_{1}(i)}}
	\end{align*}
	where~$\xi_{i} \in \{0,1,2,3\}$~,~$(\xi_{i})$~takes greedy exhibition.Furthermore,we have
	\begin{align*}\label{EQ338}
		N_{n}^{\sigma}(a)=\|e_{i_{0}(a)}A_{\sigma_{1}}^{\xi_{1}}A_{\sigma_{2}}^{\xi_{2}}\ldots A_{\sigma_{n}}^{\xi_{n}}\|.
	\end{align*}
	where ~$i_{0}(a) \in \{1,2,\ldots,N+M\}$~ and ~$a=\Gamma_{a}(i_{0}(a))$~.
\end{lemma}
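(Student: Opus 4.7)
The plan is to combine Lemma~\ref{LL32} with the identification $B_{\sigma_k}(3^{n_0(k-1)}4^{n_1(k-1)}a)=A_{\sigma_k}^{\xi_k}$, where $\xi_k$ is the $k$-th greedy digit in the claimed expansion of $a$.

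First, I would produce the greedy expansion. Given $a\in J_\theta$, let $k=i_0(a)$ be the unique index with $a\in I_k$, and set $b_0 = Na + M + 1 - k \in [0,1)$. Define digits recursively: if $\sigma_{i+1}=0$, set $\xi_{i+1}=\lfloor 3b_i\rfloor\in\{0,1,2\}$ and $b_{i+1}=3b_i-\xi_{i+1}$; if $\sigma_{i+1}=1$, set $\xi_{i+1}=\lfloor 4b_i\rfloor\in\{0,1,2,3\}$ and $b_{i+1}=4b_i-\xi_{i+1}$. Unwinding these recursions yields the stated representation of $a$.

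Next, I would prove the single-step identity $B_{\sigma_1}(a)=A_{\sigma_1}^{\xi_1}$. Take $\sigma_1=0$ (the case $\sigma_1=1$ is identical with $K_p^j$ replacing $J_p^j$ and $A_1^j$ replacing $A_0^j$). Because $\xi_1=\lfloor 3b_0\rfloor$, one has $a\in J_k^{\xi_1}$, and the horizontal shift by $(p-k)/N$ gives $\Gamma_a(p)\in J_p^{\xi_1}$ for every $p$. Since $T_d^0$ is affine with ratio $3$ and sends length-$1/(3N)$ intervals to length-$1/N$ intervals, the event $T_d^0(\Gamma_a(p))=\Gamma_{3a}(q)$ is equivalent to $T_d^0(J_p^{\xi_1})=I_q$. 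Thus entry-by-entry $B_0(a)=B_0^{\xi_1}=A_0^{\xi_1}$, using the identity $B_0^j=A_0^j$ already derived before the lemma.

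Finally, I would iterate. A direct calculation shows
\[
3^{n_0(1)}4^{n_1(1)}a \;=\; \frac{-M-1+k'}{N}+\frac{1}{N}\sum_{i\ge 1}\frac{\xi_{i+1}}{3^{n_0(i+1)-n_0(1)}4^{n_1(i+1)-n_1(1)}}
\]
for some $k'\in\{1,\ldots,N+M\}$, so the shifted point lies in $I_{k'}$ and its greedy expansion relative to the shifted sequence $\sigma_2\sigma_3\ldots$ has digits $\xi_2,\xi_3,\ldots$. Applying the previous step to this shifted data and iterating gives $B_{\sigma_k}(3^{n_0(k-1)}4^{n_1(k-1)}a)=A_{\sigma_k}^{\xi_k}$ for every $k$; substituting into Lemma~\ref{LL32} yields the claimed formula. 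The main obstacle will be index bookkeeping: one must check that $T_d^0(J_p^{\xi_1})=I_q$ forces the single linear relation $d^1 M - d^2 N = 2M + 2 + q - 3p - \xi_1$ appearing in the definition of $A_0^{\xi_1}$, and verify that $3^{n_0(k-1)}4^{n_1(k-1)}a$ stays in $J_\theta$ at every stage, so that the recursion is well defined and the matrix product is not vacuous.
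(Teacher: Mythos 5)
Your proposal is correct and follows essentially the same route as the paper: both reduce to Lemma~\ref{LL32} and then identify $B_{\sigma_k}(3^{n_0(k-1)}4^{n_1(k-1)}a)$ with $B_{\sigma_k}^{\xi_k}=A_{\sigma_k}^{\xi_k}$ by locating $a \bmod \frac{1}{N}$ in the subinterval $J_p^{\xi_k}$ (resp.\ $K_p^{\xi_k}$) determined by the greedy digit. Your version merely makes the greedy digit recursion and the shift $a\mapsto 3^{n_0(1)}4^{n_1(1)}a$ more explicit than the paper does.
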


\begin{proof}
	We know that,for all~$a \in J_{\theta}$~,there exists~$k \in \{1,2,\ldots,N+M\}$~,such that~$a \in [\frac{-M-1+k}{N},\frac{-M+k}{N})$~,and
	\begin{align*}
		a = \frac{-M-1+k}{N} + \frac{1}{N}\sum_{i=1}^{\infty}\frac{\xi_{i}}{3^{n_{0}(i)}4^{n_{1}(i)}}.
	\end{align*}
	where ~$\xi_{i} \in \{0,1,2,3\}$~,~$(\xi_{i})$~ take greedy exhibition. Next,we prove that
	~$N_{n}(a)=\|e_{i_{0}(a)}A_{\sigma_{1}}^{\xi_{1}}A_{\sigma_{2}}^{\xi_{2}}\ldots A_{\sigma_{n}}^{\xi_{n}}\|$~.\\
	Note that,if~$x \equiv y \bmod{\frac{1}{N}}$~,we have~$\Gamma_{x}=\Gamma_{y}$~,and we also fine that
	\begin{align*}
		B_{0}(x)=B_{0}(y),\quad B_{1}(x)=B_{1}(y).
	\end{align*}
	Then we claim:fixed~$j \in \{0,1,2\}$~,for all~$x \in \{x \in J_{\theta}:x\bmod{\frac{1}{N}} \in (\frac{j}{3N},\frac{j+1}{3N})\}$~,~$B_{0}(x)$~is a constant matrix;fixed~$j \in \{0,1,2,3\}$~,for all~$x \in \{x \in J_{\theta}:x\bmod{\frac{1}{N}} \in (\frac{j}{4N},\frac{j+1}{4N})\}$,~$B_{1}(x)$~is a constant matrix.
	\par When~$a\bmod{\frac{1}{N}} \in (\frac{j}{3N},\frac{j+1}{3N})$~,for all ~$p \in \{1,2,\ldots,N+M\}$~,we can find that~$\Gamma_{x}(p) \in J_{p}^{j}$~.For fixed~$a$~,~$J_{p}^{j}\cap\Gamma_{a}=\{\Gamma_{a}(p)\}$~,by the definition of $\{T_{d}^{0}\}_{d\in\Omega_{0}}$~,we have
	\begin{align*}
		c_{pq}=\sharp\{d \in \Omega_{0} : T_{d}^{0}(\Gamma_{a}(p))
		= \Gamma_{3a}(q) \} = \sharp\{d \in \Omega_{0} : T_{d}^{0}(J_{p}^{j}) = I_{q} \}
		=c_{pq}^{j}.
	\end{align*}
	It means that when~$a\bmod{\frac{1}{N}} \in (\frac{j}{3N},\frac{j+1}{3N})$~,we have~$B_{0}(a)=B_{0}^{j}$~;similarly when~$a\bmod{\frac{1}{N}} \in (\frac{j}{4N},\frac{j+1}{4N})$~ ,we have ~$B_{1}(a)=B_{1}^{j}$~,where~$j\in\{0,1,2,3\}$~.
	\par By the lemma\ref{LL32}we can find that
	\begin{align*}
		N_{n}^{\sigma}(a)&=\|e_{i_{0}(a)}B_{\sigma_{1}}(a)B_{\sigma_{2}}(3^{n_{0}(1)4^{n_{1}(1)}}a)\ldots B_{\sigma_{n}}(3^{n_{0}(n-1)4^{n_{1}(n-1)}}a)\|\\
		&=\|e_{i_{0}(a)}B_{\sigma_{1}}^{\xi_{1}}B_{\sigma_{2}}^{\xi_{2}}\ldots B_{\sigma_{n}}^{\xi_{n}}\|\\
		&=\|e_{i_{0}(a)}A_{\sigma_{1}}^{\xi_{1}}A_{\sigma_{2}}^{\xi_{2}}\ldots A_{\sigma_{n}}^{\xi_{n}}\|.
	\end{align*}
	This is the end of the lemma.
\end{proof}
\begin{proof}
	[The proof of Threorem \ref{T1}]
	It follow from\eqref{EQ319}that
	\begin{align*}
		\overline{\dim}_{B}(F_{\sigma}\cap L_{a}) &= \limsup_{k\to\infty}
		\frac{\log N_{k}^{\sigma}}{n_{0}(k)\log3+n_{1}(k)\log4}.
	\end{align*}
	By the lemma\ref{LL33}we can find that
	\begin{align*}
		\overline{\dim}_{B}(F_{\sigma}\cap L_{a}) &= \limsup_{k\to\infty}
		\frac{\log \|e_{i_{0}(a)}A_{\sigma_{1}}^{\xi_{1}}A_{\sigma_{2}}^{\xi_{2}}\ldots A_{\sigma_{k}}^{\xi_{k}}\|}{n_{0}(k)\log3+n_{1}(k)\log4}.
	\end{align*}
	This is the end of the theorem
\end{proof}

\section{Multifractal analysis of rational intercepts of Sierpinski carpet with Moran type}
~2009~, Feng \cite{MR2031963,MR2506331} studied the multifractal spectrum of Lyapunov exponents of the product of positive matrices, and obtained the relationship between the multifractal spectrum and the pressure function, and for general non-negative matrices, as long as the limit matrix is irreducible, there are similar results.Inspired by this, this chapter assumes the existence of the box dimension of the set ~$F_{\sigma} cap L_{a}$~, that is
\begin{align*}
	\dim_{B}(F_{\sigma} \cap L_{a}) = \lim_{k\to\infty}\frac{\log\|A_{\sigma_{1}}^{\xi_{1}}A_{\sigma_{2}}^{\xi_{2}}\ldots A_{\sigma_{k}}^{\xi_{k}}\|}{n_{0}(k)\log3+n_{1}(k)\log4}.
\end{align*}
On this basis, we will discuss the dimension of the set as follow
\begin{align*}
	E(\alpha)=\{a \in J_{\theta}:\dim_{B}(F_{\sigma} \cap L_{a})=\alpha\}.
\end{align*}
where ~$\alpha \in \mathbb{R}$~such that~$E(\alpha) \neq \emptyset$~.
\subsection{Main result}
Before giving the main conclusions of this chapter, we need to give some definitions of symbols.First,We define the symbol space ~$\Sigma_{\sigma}$~.Let ~$\Sigma_{0}=\{0,1,2\}$~,~$\quad \Sigma_{1}=\{0,1,2,3\}$~,then the definition of ~$\Sigma_{\sigma}$~ as follow
\begin{align*}
	\Sigma_{\sigma}=\prod_{i=1}^{\infty}\Sigma_{\sigma_{i}}.
\end{align*}
And the definition of the measure on that symbol space as follow
\begin{align*}
	d(x,y)=2^{-\min\{i:x_{i} \not= y_{i},x=(x_{i})_{i=1}^{\infty},y=(y_{i})_{i=1}^{\infty} \in\Sigma_{\sigma}\}}.
\end{align*}
Les~$\Sigma_{\sigma}^{n}$~ denotes a collection of all words of ~$n$ in length,~$\Sigma_{\sigma}^{\ast}$~denotes a collection of words of any length.For all~$\omega=\omega_{1}\omega_{1}\ldots\omega_{n}\in\Sigma_{\sigma}^{n}$~,~$[\omega]$~is a cylinder set,that is
\begin{align*}
	[\omega]=\{(x_{i})_{i=1}^{\infty}:x_{i}=\omega_{i},i=1,2,\ldots,n\}.
\end{align*}
~$\sigma$~is a 0,1 sequence.~$n_{0},n_{1}$~ indicates the frequency of ~$0,1$~ in the sequence ~$\sigma$~.For all~$q \in \mathbb{R}$~,define pressure function~$P_{A}(q)$~as follow
\begin{align*}
	P_{A}(q)=\limsup_{k \to \infty}\frac{\log\sum_{x\in\Sigma_{\sigma}^{k}}\|A_{\sigma_{1}}^{x_{1}}A_{\sigma_{2}}^{x_{2}}\ldots A_{\sigma_{k}}^{x_{k}}\|}{k}.
\end{align*}

\begin{theorem}\label{T2}
	Given~$\alpha \in \mathbb{R}$~,such that
	\begin{align*}
		E(\alpha)=\{a\in J_{\theta}:\dim_{B}(F_{\sigma} \cap L_{a})=\alpha\} \neq \emptyset,
	\end{align*}
	then we have
	\begin{align*}
		\dim_{H}(E(\alpha)) \le \inf_{q \in \mathbb{R}}\Big\{-q\alpha+\frac{P_{A}(q)}{n_{0}\log3+n_{1}\log4}\Big\}.
	\end{align*}
\end{theorem}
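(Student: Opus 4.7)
The plan is to execute the classical cover-and-Chebyshev multifractal upper bound in the symbolic coding $\Sigma_\sigma$ furnished by Lemma \ref{LL33}. Every $a\in J_\theta$ corresponds via the greedy expansion to a pair $(i_0(a),\xi(a))$ with $\xi(a)=(\xi_i)_{i\ge1}\in\Sigma_\sigma$, and by the assumed existence of $\dim_B(F_\sigma\cap L_a)$ combined with Lemma \ref{LL33}, the set $E(\alpha)$ consists of those $a$ for which $\log\|A_{\sigma_1}^{\xi_1}\cdots A_{\sigma_k}^{\xi_k}\|/c_k\to\alpha$, where I abbreviate $c_k:=n_0(k)\log3+n_1(k)\log4$; note $c_k=k(n_0\log3+n_1\log4)(1+o(1))$.

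First I would decompose $E(\alpha)$ as the countable union, over $K\in\mathbb{N}$ and rational $\varepsilon>0$, of the truncated level sets
\begin{align*}
E_{K,\varepsilon}(\alpha)=\Big\{a\in E(\alpha): e^{(\alpha-\varepsilon)c_k}\le\|A_{\sigma_1}^{\xi_1}\cdots A_{\sigma_k}^{\xi_k}\|\le e^{(\alpha+\varepsilon)c_k}\text{ for all }k\ge K\Big\}.
\end{align*}
By countable stability of Hausdorff dimension it suffices to bound $\dim_H E_{K,\varepsilon}(\alpha)$ uniformly, and then send $\varepsilon\to0$.

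Next, for each $k\ge K$ I would cover $E_{K,\varepsilon}(\alpha)$ by the projected cylinders $\xi^{-1}([\omega])$ for those $\omega\in\Sigma_\sigma^k$ that arise as length-$k$ prefixes of sequences coming from $E_{K,\varepsilon}(\alpha)$. By the greedy formula of Lemma \ref{LL33} each such cylinder has length at most $C/(3^{n_0(k)}4^{n_1(k)})=Ce^{-c_k}$. Fixing $q\in\mathbb{R}$, the Chebyshev-type inequality
\begin{align*}
1\le\|A_{\sigma_1}^{\omega_1}\cdots A_{\sigma_k}^{\omega_k}\|^{q}\exp\!\big(-q(\alpha-\operatorname{sgn}(q)\varepsilon)c_k\big)
\end{align*}
is valid on $E_{K,\varepsilon}(\alpha)$ because $\|A^\omega\|\ge e^{(\alpha-\varepsilon)c_k}$ when $q\ge0$ and $\|A^\omega\|\le e^{(\alpha+\varepsilon)c_k}$ when $q<0$. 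Summing over the selected $\omega$ and replacing by the full sum over $\Sigma_\sigma^k$, the $s$-Hausdorff sum of this cover is at most
\begin{align*}
C^s\,e^{-sc_k-q(\alpha-\operatorname{sgn}(q)\varepsilon)c_k}\sum_{\omega\in\Sigma_\sigma^k}\|A_{\sigma_1}^{\omega_1}\cdots A_{\sigma_k}^{\omega_k}\|^{q}.
\end{align*}
By the definition of $P_A(q)$, for any $\eta>0$ the latter sum is at most $e^{k(P_A(q)+\eta)}$ along a subsequence of $k$'s, so the displayed quantity tends to $0$ provided $s>-q\alpha+|q|\varepsilon+(P_A(q)+\eta)/(n_0\log3+n_1\log4)$. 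Letting $\eta,\varepsilon\to0$ and taking the infimum over $q\in\mathbb{R}$ yields the claim.

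The principal technical obstacle will be synchronising the asymptotic regimes: the sandwich on $\|A^\omega\|$ must hold for \emph{all} large $k$ (ensured by membership in $E_{K,\varepsilon}(\alpha)$), while the pressure bound is only a $\limsup$ and holds along a subsequence, so the cover has to be constructed along exactly those $k$. A secondary bookkeeping issue is to check that the greedy coding $a\mapsto\xi(a)$ transfers Hausdorff dimension from $\Sigma_\sigma$ back to $J_\theta$: it is Lipschitz on each interval $I_{k_0}$, and the exceptional points admitting two greedy expansions (endpoints of the intervals $J_p^j$ and $K_p^j$) form a countable set which contributes nothing to Hausdorff dimension and can be discarded before the covering argument.
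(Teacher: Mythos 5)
Your proposal is correct and follows essentially the same route as the paper: pass to the symbolic coding $\Sigma_\sigma$ via the greedy expansion (the paper's equation \eqref{EQ410}), cover the level set by length-$k$ cylinders, apply the Chebyshev-type inequality to compare the cylinder count with $\sum_{\omega}\|A_\omega\|^{q}$, invoke the pressure $P_A(q)$, and optimize over $q$. The only differences are presentational — you estimate the $s$-dimensional Hausdorff sum directly and handle the $\limsup$/subsequence synchronisation somewhat more carefully via the sets $E_{K,\varepsilon}(\alpha)$, whereas the paper's Lemma \ref{LL41} passes through the upper box dimension of the covering sets $G(\alpha;k,\epsilon)$ and the cylinder count $f(\alpha;n,\epsilon)$.
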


\begin{theorem}\label{T3}
	Let~$s=\dim_{H}(F_{\sigma})$~,then we have
	\begin{align*}
		\dim_{H}(E(s-1)) < 1.
	\end{align*}
\end{theorem}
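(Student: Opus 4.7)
The plan is to apply Theorem~\ref{T2} at $\alpha = s-1$ and then exhibit a single $q^{\ast}$ for which the Legendre-type upper bound is strictly less than $1$. Write $\Phi(q) := -q(s-1) + P_A(q)/D$ with $D := n_0 \log 3 + n_1 \log 4$. Theorem~\ref{T2} gives $\dim_H(E(s-1)) \le \inf_{q \in \mathbb{R}} \Phi(q)$, so it suffices to produce $q^{\ast}$ with $\Phi(q^{\ast}) < 1$.

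First I would evaluate $\Phi$ at the endpoints $q=0$ and $q=1$. Since $P_A(0) = \lim_k k^{-1}\log |\Sigma_\sigma^k| = D$, one immediately gets $\Phi(0)=1$. For $q=1$ I would prove $P_A(1) = sD$ by a double counting of the integral $\int_{J_\theta} N_k^\sigma(a)\,da$. On one hand, Lemma~\ref{LL33} together with the partition of $J_\theta$ into the cylinders $[p,\xi_1\ldots\xi_k]$, each of length $1/(N\cdot 3^{n_0(k)}4^{n_1(k)})$, gives $\int_{J_\theta} N_k^\sigma(a)\,da = \|B_{\sigma_1}\cdots B_{\sigma_k}\|/(N\cdot 3^{n_0(k)}4^{n_1(k)})$, where $B_\sigma := \sum_\xi A_\sigma^\xi$. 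On the other hand, geometrically the same integral equals the number of level-$k$ squares, $8^{n_0(k)}12^{n_1(k)}$, times the length $3^{-n_0(k)}4^{-n_1(k)}(N+M)/N$ of the $a$-parameter set for which $L_a$ meets a fixed such square. Equating yields $\|B_{\sigma_1}\cdots B_{\sigma_k}\| = (N+M)\cdot 8^{n_0(k)}12^{n_1(k)}$, so $P_A(1) = n_0\log 8 + n_1\log 12 = sD$ and $\Phi(1) = -(s-1)+s = 1$.

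Next, $P_A$ (and hence $\Phi$) is convex by log-convexity of $q$-moments (H\"older). A convex function on $[0,1]$ with equal endpoint values is either constant on $[0,1]$ or strictly smaller than that common value somewhere in the interior. If the first alternative held, $P_A$ would be affine on $[0,1]$ with slope $(s-1)D$, which by asymptotic sharpness of H\"older's inequality would force $\|A_{\sigma_1}^{x_1}\cdots A_{\sigma_k}^{x_k}\|$ to be essentially independent of $x\in\Sigma_\sigma^k$ up to sub-exponential errors as $k\to\infty$. A short calculation at $k=2$ (for instance, $\|(A_0^0)^{2}\|\neq \|A_0^{0}A_0^{1}\|$ for the eight generators in $\Omega_0$) displays genuine variation in these norms, and the standard Gibbs / real-analyticity machinery for non-negative matrix cocycles (in the spirit of the Feng references cited before Theorem~\ref{T2}) propagates this into strict convexity of $P_A$ on some sub-interval of $[0,1]$. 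Hence some $q^\ast\in (0,1)$ satisfies $\Phi(q^\ast)<1$, and therefore $\dim_H(E(s-1))\le \Phi(q^\ast)<1$.

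The main obstacle is the last step: upgrading finite-level non-constancy of the $\|A^{x}\|$ to strict convexity of the limiting pressure. Finite-level strict H\"older does not a priori survive the limit $k\to\infty$, and one has to invoke the thermodynamic formalism for products of non-negative matrices -- real-analyticity of the pressure function and uniqueness of the associated Gibbs state -- which itself rests on an irreducibility / primitivity hypothesis for $B_0$ and $B_1$ that depends on the chosen rational slope $\tan\theta = M/N$ and has to be verified for the relevant parameters.
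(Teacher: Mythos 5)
Your reduction is sound as far as it goes, and in places it is more careful than it needs to be for this paper: the Fubini/double-counting identity $\sum_{\xi}\|A_{\sigma_1}^{\xi_1}\cdots A_{\sigma_k}^{\xi_k}\| = (N+M)\,8^{n_0(k)}12^{n_1(k)}$ giving $P_A(1)=s(n_0\log 3+n_1\log 4)$ is a genuine proof of something the paper merely asserts, and you correctly identify $P_A$ as \emph{convex} (the paper calls it concave, which is false for a log-moment function and would in fact push the Legendre bound in the wrong direction). But the proof is not complete, and the gap sits exactly where the entire content of the theorem lives. With $\Phi(0)=\Phi(1)=1$ and $\Phi$ convex, everything you have established yields only $\inf_{q\in[0,1]}\Phi(q)\le 1$; the statement to be proved is the \emph{strict} inequality, which is equivalent to $P_A$ not being affine on $[0,1]$. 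Your proposed mechanism for ruling out affineness --- exhibit variation of $\|A_{\omega}\|$ at a finite level and invoke Gibbs-state/real-analyticity results for matrix products --- is not an argument: as you yourself concede, finite-level strict convexity of $q\mapsto\frac{1}{k}\log\sum_{\omega}\|A_{\omega}\|^q$ does not survive the limit $k\to\infty$, and the Feng-type machinery you would lean on requires an irreducibility/primitivity hypothesis on the matrices $A_0^j,A_1^j$ that you have not verified and that depends on the slope $M/N$ (note also that $F_{\sigma}$ is not self-similar, so even quoting those results verbatim is not licit here). So the step ``hence some $q^{\ast}\in(0,1)$ satisfies $\Phi(q^{\ast})<1$'' is unsupported, and without it you have proved $\dim_H(E(s-1))\le 1$, which is trivial.

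For what it is worth, the paper's own proof founders at the identical spot: after asserting $P_A(0)=1$, $P_A(1)=s$ and (incorrectly) concavity, it simply posits the existence of $q'\in[0,1]$ with $P_A(q')<(s-1)q'+1$ and its final display concludes only $\dim_H(E(s-1))\le\inf_{q>0}\{-q(s-1)+P_A(q)\}\le 1$, not $<1$. So your proposal reproduces the paper's strategy (Legendre bound from Theorem~\ref{T2}/Lemma~\ref{LL42} plus endpoint values of the pressure) and even repairs two of its errors, but neither you nor the paper supplies the one nontrivial ingredient: a proof that the pressure is not affine on $[0,1]$, e.g.\ via an argument that the Lebesgue-typical value of $\lambda_A$ (which is $P_A'(1)$ when it exists) is strictly less than $(s-1)(n_0\log3+n_1\log4)$, in the spirit of the Liu--Xi--Zhao paper cited in the introduction. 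Until that is done, the strict inequality remains unproved.
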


\subsection{The proof of Theorem \ref{T2}}
In order to prove the main conclusion of this chapter, we need to first explain some symbols.
\par For all~$k\in\{1,2,\ldots,N+M\}$~,we define ~$\Psi_{k}:\Sigma_{\sigma}\to[\frac{-M-1+k}{N},\frac{-M+k}{N}]$~ as
\begin{align*}
	\Psi_{k}(\xi)=\frac{-M-1+k}{N}+\frac{1}{N}\sum_{i=1}^{\infty}\frac{\xi_{i}}{3^{n_{0}(i)}4^{n_{1}(i)}}.
\end{align*}
where~$\xi=(\xi_{i})_{i=1}^{\infty} \in \Sigma_{\sigma}$~.It is obvious that ~$\Psi_{k}$~is one-to-one mapping,and is bi-Lipschitz.We remember
\begin{align*}
	\lambda_{A}(x)=\lim_{k\to\infty}\frac{\log\|A_{\sigma_{1}}^{x_{1}}A_{\sigma_{2}}^{x_{2}}\ldots A_{\sigma_{k}}^{x_{k}}\|}{k}.
\end{align*}
Note that
\begin{equation} \label{EQ410}
	\begin{aligned}
		\dim_{H}(E(\alpha))&=\dim_{H}\{a\in J_{\theta}:dim_{B}(F_{\sigma} \cap L_{a})=\alpha\}\\
		&=\dim_{H}\bigg\{a\in J_{\theta}:\lim_{k\to\infty}\frac{\log\|A_{\sigma_{1}}^{\xi_{1}}A_{\sigma_{2}}^{\xi_{2}}\ldots A_{\sigma_{k}}^{\xi_{k}}\|}{n_{0}(k)\log3+n_{1}(k)\log4}=\alpha\bigg\}\\
		&=\dim_{H}\bigg\{a\in J_{\theta}:\lim_{k\to\infty}\frac{\log\|A_{\sigma_{1}}^{\xi_{1}}A_{\sigma_{2}}^{\xi_{2}}\ldots A_{\sigma_{k}}^{\xi_{k}}\|}{k}=\alpha(n_{0}\log3+n_{1}\log4)\bigg\}\\
		&=\dim_{H}\bigcup_{k=1}^{N+M}\bigg\{a\in I_{k}:\lim_{k\to\infty}\frac{\log\|A_{\sigma_{1}}^{\xi_{1}}A_{\sigma_{2}}^{\xi_{2}}\ldots A_{\sigma_{k}}^{\xi_{k}}\|}{k}=\alpha(n_{0}\log3+n_{1}\log4)\bigg\}\\
		&=\dim_{H}\{\xi\in\Sigma_{\sigma}:\exists k \in\{1,2,\ldots,N+M\},s.t. \Psi_{k}(\xi)=a,\lambda_{A}(\xi)=\alpha(n_{0}\log3+n_{1}\log4)\}\\
		&\le \dim_{H}\{\xi\in\Sigma_{\sigma}:\lambda_{A}(\xi)=\alpha(n_{0}\log3+n_{1}\log4)\}.
	\end{aligned}
\end{equation}
Therefore, to discuss the dimension of ~$E(\alpha)$~, we need to discuss the dimension of the set 
\begin{align*}
	E_{A}(\alpha)=\{\xi\in\Sigma_{\sigma}:\lambda_{A}(\xi)=\alpha\}.
\end{align*}
\begin{lemma}\label{LL41}
	\begin{align*}
		\dim_{H}(E_{A}(\alpha)) \le \frac{1}{n_{0}\log3+n_{1}\log4}\inf_{q\in\mathbb{R}}\Big\{-q\alpha+P_{A}(q)\Big\}.
	\end{align*}
\end{lemma}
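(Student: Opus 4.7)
The plan is a Chebyshev-type covering argument carried out on the symbolic space $\Sigma_\sigma$. Fix $q \in \mathbb{R}$ and $\epsilon > 0$, and set
\[
s \;=\; \frac{-q\alpha + P_A(q)}{n_0\log 3 + n_1\log 4} + \epsilon.
\]
The goal is to show $\mathcal{H}^s(E_A(\alpha)) = 0$; taking the infimum over $q$ and letting $\epsilon \downarrow 0$ then yields the lemma.

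First I would decompose $E_A(\alpha)$ according to the rate of convergence of the exponent. For each $\eta > 0$ and integer $N \ge 1$ put
\[
E_{N,\eta} \;=\; \bigl\{\xi \in \Sigma_\sigma :\ e^{n(\alpha - \eta)} \le \|A_{\sigma_1}^{\xi_1}\cdots A_{\sigma_n}^{\xi_n}\| \le e^{n(\alpha + \eta)}\ \text{for all } n \ge N\bigr\},
\]
so $E_A(\alpha) \subset \bigcup_N E_{N,\eta}$. By countable stability of Hausdorff dimension it suffices to bound $\dim_H E_{N,\eta}$ uniformly in $N$. For each $n \ge N$, cover $E_{N,\eta}$ by the family $\mathcal{C}_n$ of length-$n$ cylinders $[\omega]$ whose matrix product obeys the same two-sided estimate; every $\xi \in E_{N,\eta}$ lies in exactly one such cylinder.

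A Chebyshev-type inequality, using the lower bound on the matrix norm when $q \ge 0$ and the upper bound when $q < 0$, then gives
\[
|\mathcal{C}_n| \;\le\; e^{-qn\alpha + |q|n\eta}\sum_{\omega \in \Sigma_\sigma^n}\|A_{\sigma_1}^{\omega_1}\cdots A_{\sigma_n}^{\omega_n}\|^q \;\le\; e^{n(-q\alpha + |q|\eta + P_A(q) + \epsilon')}
\]
for all large $n$, by the definition of the pressure (read with the standard $q$-th power of the norm). Now a length-$n$ cylinder has diameter comparable to $3^{-n_0(n)}4^{-n_1(n)}$ (the metric on $\Sigma_\sigma$ under which $\Psi_k$ of the preceding subsection is bi-Lipschitz), and $n_0(n)/n \to n_0$, $n_1(n)/n \to n_1$, so
\[
\sum_{\omega \in \mathcal{C}_n}\operatorname{diam}[\omega]^{s} \;\le\; \exp\!\Bigl(n\bigl[-q\alpha + |q|\eta + P_A(q) + \epsilon' - s(n_0\log 3 + n_1\log 4)\bigr] + o(n)\Bigr).
\]
By the choice of $s$ the bracket equals $-\epsilon(n_0\log 3 + n_1\log 4) + |q|\eta + \epsilon'$, which is strictly negative once $\eta$ and $\epsilon'$ are small. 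Hence the $s$-sum $\to 0$ as $n \to \infty$ and $\mathcal{H}^s(E_{N,\eta}) = 0$, finishing the proof after taking $\inf_q$.

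The main delicate point is the metric bookkeeping: one has to align the symbolic metric with the bi-Lipschitz identification provided by $\Psi_k$ so that a length-$n$ cylinder really has diameter $\asymp 3^{-n_0(n)}4^{-n_1(n)}$, which is precisely what produces the denominator $n_0\log 3 + n_1\log 4$ in the target formula. Once that identification is in place, the Chebyshev/pressure manipulation and the case split on the sign of $q$ are routine; the infimum over $q$ is then immediate.
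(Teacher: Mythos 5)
Your argument is correct and is essentially the paper's own proof: both decompose $E_A(\alpha)$ by the rate at which $\tfrac{1}{n}\log\|A_{\sigma_1}^{\xi_1}\cdots A_{\sigma_n}^{\xi_n}\|$ is pinned near $\alpha$, cover each piece by the length-$n$ cylinders satisfying that pinning, count them with the same Chebyshev estimate against $\sum_\omega\|A_\omega\|^q$ (split on the sign of $q$), and convert the count to dimension via the cylinder diameter $3^{-n_0(n)}4^{-n_1(n)}$. The only cosmetic difference is that you estimate $\mathcal{H}^s$ directly for the critical $s$ while the paper routes through the upper box dimension of each piece and $\dim_H\le\overline{\dim}_B$; your version also quietly repairs the paper's slip of writing the pinned sets $G(\alpha;k,\epsilon)$ as a union over $n\ge k$ where an intersection (your ``for all $n\ge N$'') is what the covering step actually requires.
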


\begin{proof}
	For convenient,for any~$\omega=\omega_{1}\omega_{2}\ldots\omega_{n} \in \Sigma_{\sigma}^{n}$~,and~$k \in \mathbb{N}$~,we set
	\begin{align*}
		A_{\omega}=A_{\sigma_{1}}^{\omega_{1}}A_{\sigma_{2}}^{\omega_{2}}\ldots A_{\sigma_{n}}^{\omega_{n}}.
	\end{align*}
	For any~$\epsilon >0$~,let
	\begin{align*}
		G(\alpha;k,\epsilon)=\bigcup_{n=k}^{\infty}\bigg\{x\in\Sigma_{\sigma}:\Big|\frac{1}{n}\log\|A_{x_{1}x_{2}\ldots x_{n}}\|-\alpha\Big| \le \epsilon\bigg\}.
	\end{align*}
	Then we can find that
	\begin{align*}
		E_{A}(\alpha)\subset\bigcup_{k=1}^{\infty}G(\alpha;k,\epsilon).
	\end{align*}
	By the property of Hausdorff dimension,we have
	\begin{align*}
		\dim_{H}E_{A}(\alpha)\le\dim_{H}\bigcup_{k=1}^{\infty}G(\alpha;k,\epsilon)
		\le\sup_{k}\dim_{H}G(\alpha;k,\epsilon).
	\end{align*}
	For any~$n \in \mathbb{N}$~,define
	\begin{align*}
		F(\alpha;n,\epsilon)&=\bigg\{\omega \in \Sigma_{\sigma}^{n}:\bigg|\frac{1}{n}\log\|A_{\omega}\|-\alpha \bigg| \le \epsilon\bigg\},\\
		f(\alpha;n,\epsilon)&=\sharp F(\alpha;n,\epsilon).
	\end{align*}
	Note that,For all~$n \ge k$~,we have
	\begin{align*}
		G(\alpha;k,\epsilon)\subset\bigcup_{\omega\in F(\alpha;n,\epsilon)}[\omega].
	\end{align*}
	Furthermore,for all~$\omega \in F(\alpha;n,\epsilon)$~,~$\mid[\omega]\mid=3^{-n_{0}(n)}4^{-n_{1}(n)}$~,By the property of box dimension,we have
	\begin{align*}
		\overline{\dim}_{B}G(\alpha;k,\epsilon)\le \limsup_{n\to\infty}\frac{\log(f(\alpha;n,\epsilon))}{\log3^{n_{0}(n)}4^{n_{1}(n)}}.
	\end{align*}
	Then since the relationship between the Hausdorff dimension and the box dimension,we have
	\begin{align*}
		\dim_{H}E_{A}(\alpha)&\le\sup_{k}\dim_{H}G(\alpha;k,\epsilon) \\
		&\le\sup_{k}\overline{\dim}_{B}G(\alpha;k,\epsilon)\\
		&\le \limsup_{n\to\infty}\frac{\log(f(\alpha;n,\epsilon))}{\log3^{n_{0}(n)}4^{n_{1}(n)}}.
	\end{align*}
	By the definition of ~$F(\alpha;n,\epsilon)$~,it is easy to find that for all~$\omega \in F(\alpha;n,\epsilon)$~,we have
	\begin{align*}
		\bigg|\frac{1}{n}\log\|A_{\omega}\|-\alpha\bigg| \le \epsilon
	\end{align*}
	Hence
	\begin{equation*}
		\|A_{\omega}^{q}\| \ge
		\begin{cases}
			e^{nq(\alpha-\epsilon)} & q\ge0,\\e^{nq(\alpha+\epsilon)} & q\le0.
		\end{cases}
	\end{equation*}
	Furthermore,note that
	~$\sharp F(\alpha;n,\epsilon) \le \sharp\Sigma_{\sigma}^{n}$~,then we have
	\begin{equation*}
		\sum_{\omega\in\Sigma_{\sigma}^{n}}\|A_{\omega}^{q}\| \ge
		\begin{cases}
			f(\alpha;n,\epsilon)e^{nq(\alpha-\epsilon)} & q\ge0,\\f(\alpha;n,\epsilon)e^{nq(\alpha+\epsilon)} & q\le0.
		\end{cases}
	\end{equation*}
Hence,for any~$q\in\mathbb{R}$,we have
	\begin{align*}
		P_{A}(q)&=\limsup_{k \to \infty}\frac{\log\sum_{x\in\Omega_{\omega}^{k}}\|A_{x_{1}x_{2}\ldots x_{k}}^{\omega_{1}\omega_{2}\ldots\omega_{k}}\|}{k}\\
		&\ge q\alpha+\limsup_{n\to\infty}\frac{f(\alpha;n,\epsilon)}{n}\\
		&\ge q\alpha+(n_{0}\log3+n_{1}\log4)\limsup_{n\to\infty}\frac{f(\alpha;n,\epsilon)}{\log3^{n_{0}(n)}4^{n_{1}(n)}}\\
		&\ge q\alpha+(n_{0}\log3+n_{1}\log4)\dim_{H}E_{A}(\alpha).
	\end{align*}
	Furthermore,we find that
	\begin{align*}
		\dim_{H}(E_{A}(\alpha)) \le \frac{1}{n_{0}\log3+n_{1}\log4}\inf_{q\in\mathbb{R}}\{-q\alpha+P_{A}(q)\}.
	\end{align*}
\end{proof}
\begin{proof}
	[The proof of Theorem \ref{T2}]
	It follow from\eqref{EQ410}and lemma\ref{LL41}that
	\begin{align*}
		\dim_{H}(E(\alpha))&\le\dim_{H}\{\xi\in\Sigma_{\sigma}:\lambda_{A}(\xi)=\alpha(n_{0}\log3+n_{1}\log4)\}\\
		&=\dim_{H}(E_{A}(\alpha(n_{0}\log3+n_{1}\log4))\\
		&\le\frac{1}{n_{0}\log3+n_{1}\log4}\inf_{q\in\mathbb{R}}\{-(n_{0}\log3+n_{1}\log4)q\alpha+P_{A}(q)\}\\
		&\le\inf_{q\in\mathbb{R}}\bigg\{-q\alpha+\frac{1}{n_{0}\log3+n_{1}\log4}P_{A}(q)\bigg\}.
	\end{align*}
The proof of the theorem was completed.
\end{proof}

\subsection{The proof of Theorem \ref{T3}}
Let's start by defining a measure.For any~$q \in \mathbb{R}$~,define measure~$\mu_{q}$~ on ~$\Sigma_{\sigma}$~
\begin{align*}\label{EQ414}
	\mu_{q}([x_{1}x_{2}\ldots x_{n}])&=e^{-\log\sum_{\omega \in \Sigma_{\sigma}^{n}}\| A_{\omega}\|^{q}}\| A_{x_{1}x_{2}\ldots x_{n}}\|^{q}.
\end{align*}
It is obvious that ~$\mu_{q}$~is a probability measures.

\begin{lemma} \label{LL42}
	For all~$\alpha >0$~,let~$E_{\alpha}=\bigg\{x=(x_{i})_{i=1}^{\infty}\in\Sigma_{\sigma}:\limsup_{k\to\infty}
	\frac{\|A_{\sigma_{1}}^{x_{1}}A_{\sigma_{2}}^{x_{2}}\ldots A_{\sigma_{k}\|}^{x_{k}}}{k}\ge\alpha\bigg\}$~,then we have
	\begin{align*}
		\dim_{H}(E_{\alpha})
		\le \inf_{q>0}\bigg(-q\alpha+\frac{1}{n_{0}\log3+n_{1}\log4}P_{A}(q)\bigg).
	\end{align*}
\end{lemma}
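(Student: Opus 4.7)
The plan is to mimic the pressure-based covering argument used in Lemma~\ref{LL41}, now specialized to the one-sided level set $E_\alpha$; the chief new wrinkle is that only $q > 0$ is admissible in the infimum. Throughout, write $L = n_0 \log 3 + n_1 \log 4$ for the asymptotic exponential rate of the cylinder diameters, and interpret the $\limsup$ defining $E_\alpha$ on the geometric (dimension) scale, so that $x \in E_\alpha$ forces $\|A_{x|_n}\| \ge e^{n(\alpha L - \epsilon)}$ for infinitely many $n$.

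First I would introduce super-level sets together with their natural cylinder cover: for $\epsilon > 0$ and $k \ge 1$, set
\begin{align*}
F_n(\epsilon) = \{\omega \in \Sigma_\sigma^n : \|A_\omega\| \ge e^{n(\alpha L - \epsilon)}\}, \qquad G(\alpha;k,\epsilon) = \bigcup_{n \ge k} \bigcup_{\omega \in F_n(\epsilon)} [\omega].
\end{align*}
By the observation above, $E_\alpha \subset G(\alpha;k,\epsilon)$ for every $k$, and hence $\dim_H E_\alpha \le \sup_k \overline{\dim}_B G(\alpha;k,\epsilon)$ via the Hausdorff--box comparison used in Lemma~\ref{LL41}.

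Next I would bound $|F_n(\epsilon)|$ by a Chebyshev/pressure estimate: for any $q > 0$, $\|A_\omega\|^q \ge e^{nq(\alpha L - \epsilon)}$ on $F_n(\epsilon)$, so
\begin{align*}
|F_n(\epsilon)| \le e^{-nq(\alpha L - \epsilon)} \sum_{\omega \in \Sigma_\sigma^n} \|A_\omega\|^q \le \exp\bigl(n[-q(\alpha L - \epsilon) + P_A(q) + \epsilon]\bigr)
\end{align*}
for all sufficiently large $n$, using the $\limsup$ definition of $P_A(q)$. Since each cylinder $[\omega]$ has diameter $3^{-n_0(n)} 4^{-n_1(n)}$ and $\tfrac{1}{n}(n_0(n) \log 3 + n_1(n) \log 4) \to L$, box-counting yields
\begin{align*}
\overline{\dim}_B G(\alpha;k,\epsilon) \le \limsup_{n \to \infty} \frac{\log |F_n(\epsilon)|}{n_0(n) \log 3 + n_1(n) \log 4} \le \frac{-q(\alpha L - \epsilon) + P_A(q) + \epsilon}{L}.
\end{align*}
Letting $\epsilon \to 0$ and then taking the infimum over $q > 0$ produces $\dim_H E_\alpha \le \inf_{q > 0}\bigl(-q\alpha + P_A(q)/L\bigr)$, which is the claim.

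The principal obstacle is the restriction to $q > 0$: the one-sided hypothesis $\limsup \ge \alpha$ supplies only a lower bound on $\|A_\omega\|$ at selected scales, so Chebyshev only fires for $q > 0$; for $q < 0$ one would need a companion upper bound on $\|A_\omega\|$, which a $\limsup$-level set simply does not furnish (in contrast with the two-sided sets $E_A(\alpha)$ of Lemma~\ref{LL41}, where both tails were under control). A secondary but important bookkeeping issue is the normalization: $\alpha$ is measured on the dimension scale, while $P_A(q)$ is normalized by symbolic length $k$, so the rescaling $\alpha \mapsto \alpha L$ must be inserted cleanly when setting up the Chebyshev threshold, otherwise a spurious extra factor of $L$ contaminates the final bound.
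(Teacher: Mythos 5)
There is a genuine gap in the covering step. Your Chebyshev bound $|F_n(\epsilon)| \le e^{-nq(\alpha L-\epsilon)}\sum_{\omega\in\Sigma_\sigma^n}\|A_\omega\|^q$ is correct and is the right ingredient (it plays the role of the measure $\mu_q$ in the paper's argument), and your remarks about why only $q>0$ is usable and about the $\alpha\mapsto\alpha L$ rescaling are both on point. But the inequality
\begin{align*}
\overline{\dim}_B G(\alpha;k,\epsilon)\ \le\ \limsup_{n\to\infty}\frac{\log|F_n(\epsilon)|}{n_0(n)\log 3+n_1(n)\log 4}
\end{align*}
is false for your set $G(\alpha;k,\epsilon)=\bigcup_{n\ge k}\bigcup_{\omega\in F_n(\epsilon)}[\omega]$. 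That set is a union of cylinders of \emph{all} depths $n\ge k$; at scale $3^{-n_0(n)}4^{-n_1(n)}$ it is not covered by the $|F_n(\epsilon)|$ cylinders of depth $n$, because it also contains entire cylinders $[\omega]$ of shallower depth $m$ with $k\le m<n$, each of which needs on the order of $3^{n_0(n)-n_0(m)}4^{n_1(n)-n_1(m)}$ boxes. Indeed, as soon as some $F_m(\epsilon)$ is nonempty, $G(\alpha;k,\epsilon)$ contains a full cylinder and hence has full upper box dimension, so $\sup_k\overline{\dim}_B G(\alpha;k,\epsilon)$ only gives the trivial bound. This cannot be repaired the way it implicitly is in Lemma~\ref{LL41}: there the two-sided \emph{limit} condition lets one replace the union over $n\ge k$ by an intersection (eventually all prefixes are good), whereas membership in $E_\alpha$ only guarantees good prefixes for \emph{infinitely many} $n$, so the union over depths is unavoidable.

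The fix — and this is what the paper actually does — is to treat $\{[\omega]:\omega\in F_n(\epsilon),\ n\ge k\}$ as a Hausdorff cover of $E_\alpha$ by sets of varying diameters and estimate the $s$-dimensional pre-measure directly, rather than passing through box dimension. Concretely, with $s=-q\alpha+P_A(q)/L+2\epsilon'$ one bounds
\begin{align*}
\mathcal{H}^{s}_{\delta_k}(E_\alpha)\ \le\ \sum_{n\ge k}\ |F_n(\epsilon)|\,\bigl(3^{-n_0(n)}4^{-n_1(n)}\bigr)^{s},
\end{align*}
inserts your Chebyshev bound on $|F_n(\epsilon)|$, and observes that the resulting geometric-type series converges and its tail tends to $0$ as $k\to\infty$, giving $\mathcal{H}^{s}(E_\alpha)=0$ and hence $\dim_H(E_\alpha)\le s$. (The paper phrases this with a disjoint subcover weighted by the probability measure $\mu_q([\omega])=\|A_\omega\|^q/\sum_{\omega'}\|A_{\omega'}\|^q$, which is the same estimate packaged measure-theoretically.) With that replacement your argument goes through; without it, the chain of inequalities breaks at the box-dimension step.
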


\begin{proof}
	For convenient,for any~$\omega=\omega_{1}\omega_{2}\ldots\omega_{n} \in \Sigma_{\sigma}^{n}$~,and~$k \in \mathbb{N}$~,we remember
	\begin{align*}
		A_{\omega}=A_{\sigma_{1}}^{\omega_{1}}A_{\sigma_{2}}^{\omega_{2}}\ldots A_{\sigma_{n}}^{\omega_{n}}.
	\end{align*}
	For any~$\epsilon>0$~,as well as sufficiently large~$n$~,we denote cylinder set by
	\begin{align*}
		G(\alpha;n,\epsilon)=\bigg\{[x_{1}x_{2}\ldots x_{k}]:k\ge n,\frac{1}{k}\log\| A_{x_{1}x_{2}\ldots x_{k}}\|\ge\alpha-\frac{\epsilon}{q}\bigg\}.
	\end{align*}
	It is easy to find that ~$G(\alpha;n,\epsilon)$~cover~$E_{\alpha}=\{x=(x_{i})_{i=1}^{\infty}\in\Sigma_{\sigma}:\limsup_{k \to \infty}\frac{A_{\sigma_{1}}^{x_{1}}A_{\sigma_{2}}^{x_{2}}\ldots A_{\sigma_{k}}^{x_{k}}}{k}\ge\alpha\}$~.Let~$A(\alpha;n,\epsilon)$~is the disjoint  union of ~$G(\alpha;n,\epsilon)$~and cover~$E_{\alpha}$~,then for any~$q>0$~,by the definition of  Hausdorff measure,we can find that
	\begin{align*}
		\mathcal{H}_{3^{-n_{0}(n)}4^{-n_{1}(n)}}^{-q\alpha+P_{A}(q)+2\epsilon}(E_{\alpha})\le\sum_{\omega\in A(\alpha;n,\epsilon)}(3^{-n_{0}(k)}4^{-n_{1}(k}))^{-q\alpha+P_{A}(q)+2\epsilon}.
	\end{align*}
	By the definition of ~$A(\alpha;n,\epsilon)$~and~$G(\alpha;n,\epsilon)$~,we have
 \begin{align*}
		\mathcal{H}_{3^{-n_{0}(n)}4^{-n_{1}(n)}}^{-q\alpha+P_{A}(q)+2\epsilon}(E_{\alpha})&\le \sum_{\omega\in A(\alpha;n,\epsilon)}(3^{-n_{0}(k)}4^{-n_{1}(k)})^{-q\alpha+P_{A}(q)+2\epsilon}\\
		&\le (3^{-n_{0}(n)}4^{-n_{1}(n)})^{\epsilon}\sum_{\omega\in A(\alpha;n,\epsilon)}(3^{-n_{0}(k)}4^{-n_{1}(k)})^{P_{A}(q)}(3^{-n_{0}(k)}4^{-n_{1}(k)})^{-q\alpha+\epsilon}\\
		&\le (3^{-n_{0}(n)}4^{-n_{1}(n)})^{\epsilon}\sum_{\omega\in A(\alpha;n,\epsilon)}(3^{-n_{0}(k)}4^{-n_{1}(k)})^{P_{A}(q)}(3^{-n_{0}(k)}4^{-n_{1}(k)})^{\frac{q}{k}\log\| A_{\omega}\|}\\
		&\le (3^{-n_{0}(n)}4^{-n_{1}(n)})^{\epsilon}\sum_{\omega\in A(\alpha;n,\epsilon)}(3^{-n_{0}(k)}4^{-n_{1}(k)})^{P_{A}(q)}\| A_{\omega}\|^{q}.
	\end{align*}
	Hence since the definition of ~$\mu_{q}$~,we can find that ~$C$~ is a constant,such that
	\begin{align*}
		\mathcal{H}_{3^{-n_{0}(n)}4^{-n_{1}(n)}}^{-q\alpha+P_{A}(q)+2\epsilon}(E_{\alpha})
		&\le (3^{-n_{0}(n)}4^{-n_{1}(n)})^{\epsilon}\sum_{\omega\in A(\alpha;n,\epsilon)}(3^{-n_{0}(k)}4^{-n_{1}(k)})^{P_{A}(q)}\| A_{\omega}\|^{q}\\
		&\le (3^{-n_{0}(n)}4^{-n_{1}(n)})^{\epsilon}\sum_{\omega\in A(\alpha;n,\epsilon)}C\mu_{q}([x_{1}x_{2}\ldots x_{k}])\\
		&\le C(3^{-n_{0}(n)}4^{-n_{1}(n)})^{\epsilon}.
	\end{align*}
	By the arbitrary of ~$\epsilon$~ and ~$q$~,we can find that
	\begin{align*}
		\dim_{H}(E_{\alpha})\le\inf_{q>0}\bigg(-q\alpha+\frac{1}{n_{0}\log3+n_{1}\log4}P_{A}(q)\bigg).
	\end{align*}
	This completes the proof of the lemma.
\end{proof}

\begin{proof}
	[The proof of Theorem \ref{T3}]
	It follows from\eqref{EQ410} that
	\begin{align*}
		\dim_{H}(E(s-1))&\le\dim_{H}\{\xi\in\Sigma_{\sigma}:\lambda(\xi)=(s-1)(n_{0}\log3+n_{1}\log4)\}\\
		&\le\dim_{H}\{\xi\in\Sigma_{\sigma}:\lambda(\xi)\ge(s-1)(n_{0}\log3+n_{1}\log4)\}.
	\end{align*}
	the by the lemma\ref{LL42},we can find that
	\begin{align*}
		\dim_{H}(E(s-1))\le \inf_{q>0}\{-q(s-1)+P_{A}(q)\}.
	\end{align*}
	Hence,since the definition of~$P_{A}(q)$~,we have
	\begin{align*}
		P_{A}(0)=1,\quad P_{A}(1)=s.
	\end{align*}
	Furthermore,~$P_{A}(q)$~is a concave function that increases continuously.Then there exists ~$q^{'}\in[0,1]$~ such that~$P_{A}(q^{'})<(s-1)q^{'}+1$~.Then we have
	\begin{align*}
		\dim_{H}(E(s-1))\le \inf_{q>0}\{-q(s-1)+P_{A}(q)\}\le 1.
	\end{align*}
	The proof of the theorem was completed.
\end{proof}
\label{sec:others}

\section{Conclusion}
We consider the set generated by two iterative function systems.For sets similar to the set ~$F_{\sigma}$~, such as the set ~$F_{\sigma}$~ obtained by the sequence ~$\sigma\in{0,1,2}^{\mathbb{N}}$~, or even the limit set obtained by a finite iterative function system, the box dimension can be further explored.

%\section*{Acknowledgments}
%This was was supported in part by......

%Bibliography

%\bibliographystyle{unsrt}  
%\bibliography{references}  

\end{document}